\documentclass{amsart}
\usepackage{mathrsfs,amssymb,multirow,setspace,color}
\usepackage{hyperref}
\usepackage[a4paper, total={7in, 9in}]{geometry}
\theoremstyle{plain}
\usepackage{graphicx}
\newtheorem{theorem}{Theorem}[section]
\newtheorem{lemma}[theorem]{Lemma}

\newtheorem{corollary}[theorem]{Corollary}

\theoremstyle{definition}

\theoremstyle{remark}
\newtheorem{remark}[theorem]{Remark}

\input xy
\xyoption{all}
\def\a{\mathcal{P}_E(G)}
\def\c{\overline{\mathcal{P}_E(G)}}

\def\m{\mathcal{M}(G)}
\def\g{\mathcal{G}_{\m}}
\begin{document}
\title[Lambda  Number of the enhanced power graph of a finite group]{Lambda  Number of the enhanced power graph of a finite group}


\author[Parveen, Sandeep Dalal, Jitender Kumar]{Parveen, Sandeep Dalal, $\text{Jitender Kumar}^{^*}$ }
\address{$\text{}^1$Department of Mathematics, Birla Institute of Technology and Science Pilani, Pilani, India}
\address{$\text{}^2$School of Mathematical Sciences, National Institute of Science Education and Research, Bhubaneswar, Odisha, India}
\email{p.parveenkumar144@gmail.com,deepdalal10@gmail.com,jitenderarora09@gmail.com}

\begin{abstract}
 The enhanced power graph of a finite group $G$ is the simple undirected graph whose vertex set is $G$ and two distinct vertices $x, y$ are adjacent if $x, y \in \langle z \rangle$ for some $z \in G$. An $L( 2,1)$-labeling of graph $\Gamma$ is an integer labeling of $V(\Gamma)$ such that adjacent vertices have labels that differ by at least $2$ and vertices distance $2$ apart have labels that differ by at least $1$. The $\lambda$-number of $\Gamma$, denoted by $\lambda(\Gamma)$, is the minimum range over all $L( 2,1)$-labelings. In this article, we study the lambda number of the enhanced power graph $\a$ of the group $G$. This paper extends the corresponding results, obtained in [X. Ma, M. Feng, and K. Wang. Lambda number of the power graph of a finite group. \emph{J. Algebraic Combin.}, 53(3):743–754, 2021], of the lambda number of power graphs to enhanced power graphs. Moreover, for a non-trivial simple group $G$ of order $n$, we prove that $\lambda(\a) = n$ if and only if $G$ is not a cyclic group of order $n\geq 3$. Finally, we compute the exact value of $\lambda(\a)$ if $G$ is a finite nilpotent group.
 \end{abstract}

\subjclass[2020]{05C25}

\keywords{Enhanced power graph, lambda number, maximal cyclic subgroup, nilpotent groups \\ *  Corresponding author}

\maketitle
\section{Introduction}
For non-negative integers $j$ and $k$, an $L(j, k)$-labeling for the graph $\Gamma$ is an integer valued function $f$ on the vertex set $V(\Gamma)$ such that $|f(u)-f(v)| \geq k$ whenever $u$ and $v$ are vertices of distance two and $|f(u)-f(v)| \geq j$ whenever $u$ and $v$ are adjacent. The span of $f$ is the difference between the maximum and minimum of $f$. It is convenient to assume that the minimum of $f$ is $0$ , we regard the span of $f$ as the maximum of $f$. The $L(j, k)$\emph{-labeling number} $\lambda_{j,k}(\Gamma)$ of the graph $\Gamma$ is the minimum span over all $L(j, k)$-labelings for $\Gamma$.  The classical work of the $L(j, k)$ labeling problem is when $j=2$ and $k=1$. The $L(2,1)$-labeling number of a graph $\Gamma$ is also called the $\lambda$\emph{-number} of $\Gamma$.

The radio channel assignment problem \cite{a.Hale1980} and the study of the scalability of optical networks \cite{a.Roberts1991}, motivated the researchers to investigate the problem related to $L(j, k)$-labelings of a graph. The concept of $L(2, 1)$-labeling of a graph was introduced by Griggs and Yeh \cite{a.Griggs1992}, and they showed that for a general graph, $L(2, 1)$-labeling is NP-complete. Georges and Mauro \cite{a.Georges1995} later presented a generalization of the concept. The $L(j, k)$-labeling in particular $L(2,1)$-labeling, has been studied extensively by various authors (see \cite{a.Georger2003,a.Georges1994,a.Kim2017,a.Whittlesey1995}). A survey of results and open problems related to the $L(j,k)$-labeling of a graph can be found in  \cite{a.Yeh2006}. 

Graphs associated with groups and other algebraic structures have been studied by various researchers as they have valuable applications and are related to the automata theory (cf. \cite{kelarev2003graph,a.kelarev2009cayley,kelarev2002ring,kelarev2004labelled}).  Zhou \cite{a.Zhou2005} investigated $L(j,k)$-labeling of Cayley graphs of abelian groups. Kelarev \emph{et al.} \cite{a.kelarev2015} showed connections between the structure of a semigroup and the minimum spans of distance labelings of its Cayley graphs. In 2021, Ma \emph{et al.} \cite{a.Malambda} studied the $L(2,1)$-labeling of the power graph of a finite group. Recently, Sarkar \cite{a.sarkar2022lambda} investigated the lambda number of power graph of finite simple group. Mishra \cite{a.mishra2021lambda}, studied the lambda number of power graphs of finite $p$-groups. 

In order to measure how much the power graph is close to the commuting graph of a group $G$, Aalipour \emph{et al.} \cite{a.Cameron2016} introduced a new graph called \emph{enhanced power graph} of the group $G$. The enhanced power graph of a group $G$ is the simple undirected graph whose vertex set is $G$ and two distinct vertices $x, y$ are adjacent if $x, y \in \langle z \rangle$ for some $z \in G$. Indeed, the enhanced power graph contains the power graph and is a spanning subgraph of the commuting graph. The study of enhanced power graphs have received the considerable attention by various researchers. Aalipour \emph{et al.} \cite{a.Cameron2016} characterized the finite group $G$, for which equality holds for either two of the three graphs viz. power graph, enhanced power graph and commuting graph of $G$.  Bera \emph{et al.} \cite{a.Bera2017} characterized the abelian groups and the non abelian $p$-groups having dominatable enhanced power graphs. A complete description of finite groups with enhanced power graphs admitting a perfect code have been studied in \cite{a.ma2017perfect}. Ma \emph{et al.} \cite{a.ma2020metric} investigated the metric dimension of an enhanced power graph of finite groups. Hamzeh \emph{et al.} \cite{a.hamzeh2017automorphism} derived the automorphism groups of enhanced power graphs of finite groups.  Zahirovi$\acute{c}$ \emph{et al.} \cite{a.zahirovic2020study} proved that two finite abelian groups are isomorphic if their enhanced power graphs are isomorphic. Also, they supplied a characterization of finite nilpotent groups whose enhanced power graphs are perfect.  Recently, Panda \emph{et al.} \cite{a.panda2021enhanced} studied the graph-theoretic properties, viz. minimum degree, independence number, matching number, strong metric dimension and perfectness, of enhanced power graphs over finite abelian groups. Moreover, the enhanced power graphs associated to non-abelian groups such as semidihedral, dihedral, dicyclic, $U_{6n}$, $V_{8n}$ etc., have been studied in \cite{a.dalal2021enhanced, a.panda2021enhanced}. Bera \emph{et al.} \cite{a.bera2021connectivity} gave an upper bound for the vertex connectivity of enhanced power graph of any finite abelian group. Moreover, they classified the finite abelian groups whose proper enhanced power graphs are connected. Results related to the connectivity, dominating vertices and the spectral radius of proper enhanced power graph have been investigated by Bera \emph{et al.} in \cite{a.bera2021connectivitydomin}.  Recently, other graph theoretic properties, namely: regularity, vertex connectivity and the Wiener index, of the enhanced power graphs of finite groups have been studied in \cite{a.parveen2022}. For a comprehensive list of results and open questions on enhanced power graphs of groups, we refer the reader to  \cite{a.ma2022survey}.

The lambda number of the power graphs of finite groups has been studied in \cite{a.Malambda,a.mishra2021lambda,a.sarkar2022lambda}. In this paper, we study the lambda number of the enhanced power graph of a finite group $G$. In Section $2$ we recall the necessary definitions, results and fixed our notations which we used throughout the paper. Section $3$ comprises the main results of the present paper.
\section{Preliminaries}
In this section, first we recall the graph theoretic notions from  \cite{b.westgraph}. A \emph{graph} $\Gamma$ is a pair  $\Gamma = (V, E)$, where $V(\Gamma)$ and $E(\Gamma)$ are the set of vertices and edges of $\Gamma$, respectively. Two distinct vertices $u_1$ and $ u_2$ are $\mathit{adjacent}$, denoted by $u_1 \sim u_2$, if there is an edge between $u_1$ and $u_2$. Otherwise, we write it as $u_1 \nsim u_2$. Let $\Gamma$ be a graph. A \emph{subgraph}  $\Gamma'$ of $\Gamma$ is the graph such that $V(\Gamma') \subseteq V(\Gamma)$ and $E(\Gamma') \subseteq E(\Gamma)$. A subgraph $\Gamma '$ of graph $\Gamma$ is said to be a \emph{spanning subgraph} of $\Gamma$ if $V(\Gamma) = V(\Gamma ')$. For $X \subseteq V(\Gamma)$, the subgraph of $\Gamma$ induced by the set $X$ is the graph with vertex set $X$ and its two distinct vertices are adjacent if and only if they are adjacent in $\Gamma$. The \emph{complement} $\overline{\Gamma}$ of $\Gamma$ is a graph with same vertex set as $\Gamma$ and distinct vertices $u, v$ are adjacent in $\overline{\Gamma}$ if they are not adjacent in $\Gamma$. A graph $\Gamma$ is said to be \emph{complete} if any two distinct vertices are adjacent. We denote $K_n$ by the complete graph of $n$ vertices. A graph $\Gamma$ is said to be $k$-partite if the vertex set of $\Gamma$ can be partitioned into $k$ subsets, such that no two vertices in the same subset of the partition are adjacent. A \emph{complete k-partite} graph, denoted by $K_{n_1,n_2,\ldots ,n_k}$, is a $k$-partite graph having its parts sizes $n_1,n_2,\ldots ,n_k$ such that every vertex in each part is adjacent to all the vertices of all other parts of $K_{n_1,n_2,\ldots ,n_k}$. A vertex $v$ of $\Gamma$ is said to be a \emph{dominating vertex} if $v$ is adjacent to all the other vertices of $\Gamma$. We denote $\mathrm{Dom}(\Gamma)$ by the set of all dominating vertices of the graph $\Gamma$.  A \emph{walk} $\lambda$ in $\Gamma$ from the vertex $u$ to the vertex $w$ is a sequence of vertices $u = v_1, v_2, \ldots , v_m = w (m > 1)$ such that $v_i \sim v_{i+1}$ for every $i \in \{1, 2, \ldots , m-1\}$.  A walk is said to be a \emph{path} if no vertex is repeated. A graph $\Gamma$ is \emph{connected}  if each pair of vertices has a path in $\Gamma$. Otherwise, $\Gamma$ is \emph{disconnected}.  The \emph{distance} between $u, v \in V(\Gamma)$, denoted by $d(u, v)$,  is the number of edges in a shortest path connecting them. A \emph{path covering} $\mathrm{C}(\Gamma)$ of a graph $\Gamma$ is a collection of vertex-disjoint paths in $\Gamma$ such that each vertex in $V(\Gamma)$ is contained in a path of $\mathrm{C}(\Gamma)$. The \emph{path covering number} $c(\Gamma)$ of $\Gamma$ is the minimum cardinality of a path covering of $\Gamma$.

Let $G$ be a group. The \emph{order of an element} $x$ in $G$ is the cardinality of the subgroup generated by $x$ and it is denoted by $o(x)$. For a positive integer $n$, $\phi(n)$ denotes the Euler's totient function of $n$. Consider the set $\pi _G=\{o(g): g\neq e \in G\}$. The \emph{exponent} of a group is defined as the least common multiple of the orders of all elements of the group. For any $x,y \in G$, define a relation $\rho$ such that $x\rho y$ if and only if $\langle x\rangle = \langle y \rangle$. Note that $\rho$ is an equivalence relation and the equivalence class of $x$ is denoted by $\rho _x$. For $d\in \pi _G$, $C_d$ denotes the number of equivalence classes that consists of elements of order $d$. Moreover, we denote $\tau _d =\{g\in G : o(g)=d\}$.  For $n \geq 3$, the \emph{dihedral group} $D_{2n}$ is a group of order $2n$ is defined in terms of generators and
relations as $D_{2n} = \langle x, y  :  x^{n} = y^2 = e,  xy = yx^{-1} \rangle$. For $n \geq 2$, the \emph{dicyclic group} $Q_{4n}$ is a group of order $4n$ is defined in terms of generators and
relations as $Q_{4n} = \langle a, b  :  a^{2n}  = e, a^n= b^2, ab = ba^{-1} \rangle.$ A cyclic subgroup of a group $G$ is called a \emph{maximal cyclic subgroup} if it is not properly contained in any cyclic subgroup of $G$ other than itself. If $G$ is a cyclic group, then $G$ is  the only maximal cyclic subgroup of $G$. We denote $\m$ by the set of all maximal cyclic subgroups of $G$. Also, $M\in \m$, we write $\mathcal{G} _M=\{x\in G : \langle x \rangle =M \}$ and $\g=\{x\in G : \langle x \rangle \in \m \}$.
Let $G$ be a group and $H, K $ be subgroups of $G$. The subgroup $[H, K] $ of $ G$ is defined as the subgroup generated by all elements of the form $[h, k]:=h^{-1} k^{-1} h k$, where $h \in H, k \in K$. The lower central series of subgroups of $G$ is the descending sequence
$$
G \geq G^{(2)} \geq G^{(3)} \geq \cdots \geq G^{(i)} \geq G^{(i+1)} \geq \cdots
$$
of normal subgroups of $G$ given by $G^{(2)}:=[G, G]$ and $G^{(i+1)}:=\left[G^{(i)}, G\right]$ for every $i \geq 2$. If for a group $G$, this descending sequence contains only finitely many non-trivial terms then $G$ is said to be a \emph{nilpotent} group. Every finite $p$-group is a nilpotent group.

A finite $p$-group of order $p^{n}$ is said to be of maximal class if $ G^{(n-1)} \neq\left\{e\right\}$ and $ G^{(n)}=\left\{e\right\}$. In this case, $G /  G^{(2)} \cong \mathbb{Z}_p \times \mathbb{Z}_p$ and $ G^{(i)} /  G^{(i+1)} \cong \mathbb{Z}_{p}$ for all $2 \leq i \leq n-1$. The following result says a lot more about the class numbers of a finite $p$-group $G$. 
\begin{theorem}{\rm \cite{a.pgroupberkovi,b.pgroupisaac2006,a.pgroupkulakoff,a.pgroupmiller}}{\label{pgroupclass}}
Let $G$ be a finite $p$-group of exponent $p^{k}$. Assume that $G$ is not cyclic for an odd prime $p$, and for $p=2$, it is neither cyclic nor of maximal class. Then
\begin{enumerate}
    \item[(i)] $C_p \equiv 1+p\left(\bmod p^{2}\right)$.
    \item[(ii)] $p \mid C_{p^i}$ for every $2 \leq i \leq k$.
\end{enumerate}
\end{theorem}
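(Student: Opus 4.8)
The plan is to reduce the whole theorem to a single divisibility statement about the sets $L_j := \{x \in G : x^{p^j}=e\}$. Two elements are $\rho$-equivalent exactly when they generate the same cyclic subgroup, so $C_{p^i}$ is the number of cyclic subgroups of $G$ of order $p^i$ and hence $|\tau_{p^i}| = C_{p^i}\,\phi(p^i) = C_{p^i}\,p^{i-1}(p-1)$, while $|L_j| = 1 + \sum_{i=1}^{j}C_{p^i}\phi(p^i)$ and $|\tau_{p^i}| = |L_i|-|L_{i-1}|$. Since $G$ is non-cyclic its exponent $p^k$ satisfies $k<n$ (where $|G|=p^n$), so Frobenius's theorem applies to give $p^i\mid |L_i|$ for $1\le i\le k$. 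A short computation then shows that (i) is equivalent to $p^2\mid |L_1|$, and that (ii) for a given $i$ is equivalent to $p^{i}\mid |L_{i-1}|$; hence the theorem is equivalent to
\[
p^{\,j+1}\ \bigm|\ |L_j| \qquad\text{for all } 1\le j\le k-1 .
\]
This strengthens Frobenius's bound $p^{j}\mid |L_j|$ by exactly one power of $p$, and the hypotheses are sharp in this form: for cyclic $G=\mathbb{Z}_{p^n}$ one has $|L_j|=p^{j}$ exactly, and for $D_8$, $Q_8$ one has $|L_1|=6,2$ respectively, none divisible by $p^2=4$.

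I would prove the displayed divisibility by induction on $|G|$, the base cases being elementary abelian groups (where the range of $j$ is empty) and a few small groups. For the inductive step, choose a central subgroup $N\le Z(G)$ of order $p$. Because $N$ is central, every element of a coset $xN$ has the same $p^{j}$-th power $x^{p^j}$; consequently the projection $q\colon G\to G/N$ maps $L_j(G)$ onto a set $A_j$ with $|L_j(G)|=p\,|A_j|$, and $L_j(G/N)$ is the disjoint union of $A_j$ with the set $B_j$ of cosets whose $p^{j}$-th power lies in $N\setminus\{e\}$, so that $|L_j(G)| = p\bigl(|L_j(G/N)| - |B_j|\bigr)$. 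If $\bar G=G/N$ is non-cyclic and, when $p=2$, not of maximal class, the inductive hypothesis gives $p^{j+1}\mid |L_j(\bar G)|$ in the relevant range, and it remains to check $p^{j}\mid |B_j|$ by a Frobenius-type count of the solution sets of $x^{p^j}=n$ for $n\in N\setminus\{e\}$; this yields $p^{j+1}\mid |L_j(G)|$. For odd $p$ the bookkeeping on how $p$-th powers vary along cosets is made rigorous through the regularity of the sections that occur (or via the Hall--Petrescu collection formula), which is the point where the coordinatewise intuition of the abelian case must be replaced by genuine $p$-group theory.

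The genuinely delicate case is when $G/N$ is cyclic or (for $p=2$) of maximal class for \emph{every} central $N$ of order $p$, so that the induction cannot be fed. The first alternative forces $G$ to contain a cyclic subgroup of index $p$, and these $p$-groups are completely classified ($\mathbb{Z}_{p^{n-1}}\times\mathbb{Z}_p$, the modular group $M_{p^n}$, and for $p=2$ additionally $D_{2^n}$, $Q_{2^n}$, $SD_{2^n}$); the second alternative, together with the classification of $2$-groups of maximal class as exactly $D_{2^n}$, $Q_{2^n}$, $SD_{2^n}$, likewise pins $G$ down to an explicit list. In each surviving family I would compute $|L_j|$ by hand, verify the displayed divisibility precisely when $G$ is neither cyclic nor a $2$-group of maximal class, and in particular extract the exact residue $1+p$ (rather than merely $1$) for part (i) from the explicit value of $|L_1|$.

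I expect the main obstacle to lie in this last step together with the fibre analysis inside the induction: identifying exactly which groups admit no ``usable'' central quotient and disposing of them by explicit enumeration, and controlling $|B_j|$ and the $p$-power map along cosets in the non-abelian case, where one cannot argue coordinatewise. This is essentially the technical content spread across the four cited sources, and it is also where the generic value $1+p$ in (i) separates from the exceptional value that the excluded cyclic and maximal-class $2$-groups would produce.
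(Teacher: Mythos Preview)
The paper does not prove this statement: the theorem is quoted with citations to Berkovich, Isaacs, Kulakoff, and Miller and is used only as a black box in the later arguments. There is therefore no proof in the paper to compare your proposal against.

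On its own merits, your reduction of both parts to the single divisibility $p^{\,j+1}\mid |L_j|$ for $1\le j\le k-1$ is correct and is the classical way to package the result. The induction via a central $N$ of order~$p$ and the identity $|L_j(G)|=p\bigl(|L_j(G/N)|-|B_j|\bigr)$ are also set up correctly. The genuine gap is the step you flag yourself: establishing $p^{\,j}\mid |B_j|$. This does \emph{not} follow from the ordinary Frobenius theorem, which only gives $p^{\,j}\mid|L_j(G)|$ and hence $p^{\,j-1}\mid|A_j|$, one power of~$p$ short of what you need; it is precisely here that the cited sources do nontrivial work---Kulakoff's argument for odd~$p$ uses a more delicate count, and the $p=2$ case requires Berkovich's analysis of $2$-groups that are not of maximal class. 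Your identification of the boundary cases (groups with a cyclic subgroup of index~$p$, and $2$-groups all of whose central quotients are cyclic or of maximal class) is accurate, but disposing of them is also more than a routine enumeration. In short, your skeleton is right, but the two places you mark as ``I would prove'' and ``I expect the main obstacle'' are exactly where the substance of the four references lies, and you have not supplied that substance.
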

\begin{corollary}{\rm \cite{a.mishra2021lambda}}{\label{pgroupclasscorollary}}
Let $G$ be a finite $p$-group of exponent $p^{k}$. Then $C_{p^{i}}=1$ for some $1 \leq i \leq k$ if and only if one of the following occurs:
\begin{enumerate}
    \item $G \cong \mathbb{Z}_{p^{k}}$ and $C_{p^{j}}=1$ for all $1 \leq j \leq k$, or
    \item $p=2$ and $G$ is isomorphic to one of the following $2$-groups:
\end{enumerate}
\begin{enumerate}
    \item[(i)] dihedral $2$-group
$$
\mathbb{D}_{2^{k+1}}=\left\langle x, y: x^{2^{k}}=1, y^{2}=1, y^{-1} x y=x^{-1}\right\rangle, \quad(k \geq 1)
$$
where $C_2=1+2^{k}$ and $C_{2^{j}}=1 \text{ for all } (2 \leq j \leq k)$.
\item[(ii)] generalized quaternion $2$-group
$$
\mathbb{Q}_{2^{k+1}}=\left\langle x, y: x^{2^{k}}=1, x^{2^{k-1}}=y^{2}, y^{-1} x y=x^{-1}\right\rangle, \quad(k \geq 2)
$$
where $C_4=1+2^{k-1}$ and $C_{2^{j}}=1$ for all $1 \leq j \leq k$ and $j \neq 2$.
\item[(iii)] semi-dihedral $2$-group
$$
\mathbb{SD}_{2^{k+1}}=\left\langle x, y: x^{2^{k}}=1, y^{2}=1, y^{-1} x y=x^{-1+2^{k-1}}\right\rangle, \quad(k \geq 3)
$$
where $C_2=1+2^{k-1}, C_4=1+2^{k-2}$ and $C_{2^{j}}=1$ for all $3 \leq j \leq k$.
\end{enumerate}
\end{corollary}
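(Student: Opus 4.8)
Since Corollary \ref{pgroupclasscorollary} is essentially a book-keeping consequence of Theorem \ref{pgroupclass}, the plan is to extract it from that theorem together with the classical classification of finite $2$-groups of maximal class, and then to confirm the listed values of the class numbers $C_{2^j}$ family by family; these computations will simultaneously give the converse implication.

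For the forward direction, assume $C_{p^i}=1$ for some $1\le i\le k$. I would first record the elementary fact that a finite $p$-group of exponent $p^k$ contains an element $g$ of order $p^k$, and then $g^{p^{k-j}}$ has order $p^j$ for every $1\le j\le k$; in particular $\tau_{p^j}\neq\emptyset$ and $C_{p^j}\ge 1$ for all such $j$. If $G$ is cyclic, its exponent equals its order, so $G\cong\mathbb{Z}_{p^k}$ and we are in case $(1)$. If $G$ is non-cyclic, then necessarily $p=2$ and $G$ is of maximal class: otherwise $G$ satisfies the hypotheses of Theorem \ref{pgroupclass}, so part $(i)$ gives $C_p\equiv 1+p\pmod{p^2}$, hence $C_p\ge 1+p>1$, while part $(ii)$ gives $p\mid C_{p^j}$, hence $C_{p^j}\ge p>1$, for every $2\le j\le k$ --- contradicting $C_{p^i}=1$. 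Thus $G$ is a finite $2$-group of maximal class, and by the classical structure theorem $G$ is isomorphic to $\mathbb{D}_{2^{k+1}}$, $\mathbb{Q}_{2^{k+1}}$, or $\mathbb{SD}_{2^{k+1}}$.

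It then remains to compute $C_{2^j}$ in these three cases (for $\mathbb{Z}_{p^k}$ each subgroup of a given order is unique, so $C_{p^j}=1$ for all $j$). Each of the three groups has a cyclic subgroup $\langle x\rangle\cong\mathbb{Z}_{2^k}$ of index $2$, which contributes exactly one equivalence class of elements of each order $2,4,\dots,2^k$; the remaining $2^k$ elements have the form $x^iy$, and from the defining relations one computes $(x^iy)^2$, which equals $1$ in the dihedral case, $x^{2^{k-1}}$ in the generalized quaternion case (for every $i$), and $x^{i2^{k-1}}$ in the semi-dihedral case (nontrivial exactly when $i$ is odd). Counting how many of the $x^iy$ have order $2$ and how many have order $4$, and using that each cyclic subgroup of order $4$ of this type contains exactly two of its generators among the $x^iy$, one obtains $C_2=1+2^k$ for $\mathbb{D}_{2^{k+1}}$; $C_2=1$ and $C_4=1+2^{k-1}$ for $\mathbb{Q}_{2^{k+1}}$; and $C_2=1+2^{k-1}$, $C_4=1+2^{k-2}$ for $\mathbb{SD}_{2^{k+1}}$, while all other $C_{2^j}$ in the stated ranges equal $1$ because only $\langle x\rangle$ has subgroups of those orders. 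This confirms the converse (in the dihedral family one needs $k\ge 2$, the case $k=1$ being the Klein four-group).

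Granting Theorem \ref{pgroupclass}, there is no deep obstacle: the only external ingredient is the classification of finite $2$-groups of maximal class, and everything else is elementary. The step requiring the most care will be the order analysis of the coset elements $x^iy$ in the semi-dihedral group, where the parity of $i$ decides between order $2$ and order $4$ and the order-$4$ elements must be paired correctly into cyclic subgroups, together with keeping track of the small values of $k$ for which some of the stated index ranges are empty.
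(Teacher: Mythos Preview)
The paper does not supply its own proof of this corollary; it is quoted verbatim from \cite{a.mishra2021lambda} as a known consequence of Theorem~\ref{pgroupclass}. Your derivation---using Theorem~\ref{pgroupclass} to force $G$ to be cyclic or a $2$-group of maximal class, invoking the classical classification of the latter, and then computing $C_{2^j}$ by analysing the coset $\langle x\rangle y$---is exactly the intended route, and your computations are correct.
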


\begin{theorem}{\rm \cite{b.dummit1991abstract}}{\label{nilpotent}}
 Let $G$ be a finite group. Then the following statements are equivalent:
 \begin{enumerate}
     \item[(i)] $G$ is a nilpotent group.
     \item[(ii)] Every Sylow subgroup of $G$ is normal.
    \item[(iii)] $G$ is the direct product of its Sylow subgroups.
    \item[(iv)] For $x,y\in G, \  x$ and $y$ commute whenever $o(x)$ and $o(y)$ are relatively primes.
 \end{enumerate}
 \end{theorem}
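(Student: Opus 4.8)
The statement is the classical characterization of finite nilpotent groups, so the plan is simply to record the standard argument: establish the four conditions equivalent via the cycle $(i)\Rightarrow(ii)\Rightarrow(iii)\Rightarrow(iv)\Rightarrow(i)$, using Sylow theory together with properties of the lower central series. The only non-routine point will be the first implication.

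For $(i)\Rightarrow(ii)$ the key tool is the \emph{normalizer condition}: in a nilpotent group every proper subgroup $H$ is properly contained in $N_G(H)$. To get this, I would write the lower central series as $G=G^{(1)}\geq G^{(2)}\geq\cdots\geq G^{(c)}\geq G^{(c+1)}=\{e\}$ and let $i$ be the largest index with $G^{(i)}\not\subseteq H$ (such $i$ exists since $G\not\subseteq H$ while $G^{(c+1)}\subseteq H$); then $[G^{(i)},H]\subseteq[G^{(i)},G]=G^{(i+1)}\subseteq H$, which forces every $x\in G^{(i)}\setminus H$ to normalize $H$, so $H\subsetneq N_G(H)$. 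Applying this to a Sylow $p$-subgroup $P$ with $N=N_G(P)$: since $P\trianglelefteq N$ it is the unique, hence characteristic, Sylow $p$-subgroup of $N$, so any $g\in N_G(N)$ must conjugate $P$ to itself, giving $N_G(N)\subseteq N$; the normalizer condition then rules out $N\subsetneq G$, and therefore $P\trianglelefteq G$.

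The implication $(ii)\Rightarrow(iii)$ is a counting argument: let $p_1,\dots,p_k$ be the distinct primes dividing $|G|$ and $P_1,\dots,P_k$ the corresponding (now normal) Sylow subgroups; then $[P_i,P_j]\subseteq P_i\cap P_j=\{e\}$ for $i\neq j$, so distinct $P_i$ commute elementwise, $P_1P_2\cdots P_k$ is a subgroup, and a coprimality check gives $P_i\cap\big(\prod_{j\neq i}P_j\big)=\{e\}$, whence $|P_1\cdots P_k|=\prod_i|P_i|=|G|$ and $G=P_1\times\cdots\times P_k$. For $(iii)\Rightarrow(iv)$, writing $x=(x_1,\dots,x_k)$ and $y=(y_1,\dots,y_k)$ with $x_i,y_i\in P_i$, the hypothesis $\gcd(o(x),o(y))=1$ forces, for each $i$, at least one of $x_i,y_i$ to be trivial (their orders being coprime powers of $p_i$), so $x$ and $y$ commute coordinatewise.

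Finally, for $(iv)\Rightarrow(i)$: by $(iv)$, Sylow subgroups $P_i,P_j$ for distinct prime divisors commute elementwise, so exactly the computation used in $(ii)\Rightarrow(iii)$ shows $G=P_1\times\cdots\times P_k$; each $P_i$ is a finite $p$-group, hence nilpotent (as recalled in Section~2), and a finite direct product of nilpotent groups is nilpotent because its lower central series is the coordinatewise product of those of the factors and therefore terminates. I expect the normalizer-condition step of $(i)\Rightarrow(ii)$, together with the auxiliary fact $N_G(N_G(P))=N_G(P)$, to be the only part requiring genuine care; the remaining implications are routine Sylow bookkeeping.
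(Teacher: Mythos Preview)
Your argument is correct and follows the standard textbook route, but there is nothing to compare it against: in the paper this theorem is merely quoted from \cite{b.dummit1991abstract} and no proof is given. It serves as background for the structural description of nilpotent groups used later (notably in Lemma~\ref{lcm} and the case splits preceding Theorem~\ref{Dominating nilpotent}), so the authors simply cite it rather than reproduce a proof.
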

Any non-cyclic nilpotent group $G$ is of one of the following forms
\begin{enumerate}
    \item $G\cong G'\times \mathbb{Z}_n$, where $G'$ is a non-trivial nilpotent group of odd order having no cyclic Sylow subgroup and $\mathrm{gcd}(n,|G'|)=1$.
    \item $G\cong G'\times P\times \mathbb{Z}_n$, where $G'$ is a nilpotent group of odd order having no cyclic Sylow subgroup, $P$ is a $2$-group which is neither cyclic nor of maximal class and $\mathrm{gcd}(n,|G'|)= \mathrm{gcd}(2,n)=1$.
    \item $G\cong G'\times \mathbb{Q}_{2^{k+1}}\times \mathbb{Z}_n$, where $G'$ is described as in (2),  $\mathbb{Q}_{2^{k+1}}$ is a generalized quaternion group of order $2^{k+1}$ and $\mathrm{gcd}(n,|G'|)= \mathrm{gcd}(2,n)=1$.
    \item $G\cong G'\times \mathbb{D}_{2^{k+1}}\times \mathbb{Z}_n$, where $G'$ is described as in (2),  $\mathbb{D}_{2^{k+1}}$ is a dihedral group of order $2^{k+1}$ and $\mathrm{gcd}(n,|G'|)= \mathrm{gcd}(2,n)=1$.
     \item $G\cong G'\times \mathbb{SD}_{2^{k+1}}\times \mathbb{Z}_n$, where $G'$ is described as in (2),  $\mathbb{SD}_{2^{k+1}}$ is a dihedral group of order $2^{k+1}$ and $\mathrm{gcd}(n,|G'|)= \mathrm{gcd}(2,n)=1$.
\end{enumerate}
The following result characterizes the dominating vertices of enhanced power graph of a finite nilpotent group and we use this result explicitly in this paper without referring to it. 
\begin{theorem}{\rm \cite[Theorem 4.1]{a.bera2021connectivitydomin}}{\label{Dominating nilpotent}}
Let $G$ be a finite non-cyclic nilpotent group and let $D_{1}=\left\{\left(e', e_2,x\right): x \in \mathbb{Z}_{n}\right\}$, $D_{2}=\left\{(e', y, x):  y \in \mathbb{Q}_{2^{k+1}}, x \in \mathbb{Z}_{n} \right.$ and $\left.\mathrm{o}(y)=2\right\}$. Then
$$
\mathrm{Dom}(\a )= \begin{cases}\left\{(e', x): x \in \mathbb{Z}_{n}\right\}, & \text { if } G=G'\times \mathbb{Z}_{n}  \text { and } \operatorname{gcd}\left(\left|G'\right|, n\right)=1 \\ \left\{(e', e_1, x): x \in \mathbb{Z}_{n}\right\}, & \text { if } G=G'\times P\times \mathbb{Z}_n \text { and } \operatorname{gcd}\left(\left|G'\right|, n\right)  =\operatorname{gcd}(n, 2)=1 \\  D_{1} \cup D_{2}, & \text { if } G= G'\times \mathbb{Q}_{2^{k+1}}\times \mathbb{Z}_n \text { and } \operatorname{gcd}\left(\left|G'\right|, n\right)  =\operatorname{gcd}(n, 2)=1 \\ \left\{\left(e',  e_3, x\right): x \in \mathbb{Z}_{n}\right\} , & \text { if } G=G'\times \mathbb{D}_{2^{k+1}}\times \mathbb{Z}_n \text { and } \operatorname{gcd}\left(\left|G'\right|, n\right)  =\operatorname{gcd}(n, 2)=1 \\ \left\{\left(e', e_4,x\right): x \in \mathbb{Z}_{n}\right\} , & \text { if } G=G'\times \mathbb{SD}_{2^{k+1}}\times \mathbb{Z}_n \text { and } \operatorname{gcd}\left(\left|G'\right|, n\right)  =\operatorname{gcd}(n, 2)=1 ,\end{cases}
$$
where $e', e_i$'s, $1\leq i \leq 4$, are the identity elements of the respective groups in $G$.
\end{theorem}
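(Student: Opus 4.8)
The plan is to reduce everything to the maximal cyclic subgroups. The first observation, valid for \emph{any} finite group $G$, is that $\mathrm{Dom}(\mathcal{P}_E(G)) = \bigcap_{M \in \mathcal{M}(G)} M$. Indeed, if $v$ lies in every maximal cyclic subgroup and $x \in G$ is arbitrary, then $x$ lies in some $M \in \mathcal{M}(G)$, so $\langle v, x\rangle \le M$ is cyclic and $v \sim x$; conversely, if $v$ is a dominating vertex and $M = \langle x\rangle \in \mathcal{M}(G)$, then $\langle v, x\rangle$ is cyclic, hence contained in a maximal cyclic subgroup which, by maximality of $\langle x\rangle$, must equal $M$, so $v \in M$.

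Next I would exploit nilpotency. By Theorem \ref{nilpotent}, $G = P_1 \times \cdots \times P_r$ is the direct product of its Sylow subgroups, and since the $|P_i|$ are pairwise coprime, every subgroup $H \le G$ splits as $H = (H \cap P_1) \times \cdots \times (H \cap P_r)$, while a direct product of groups of pairwise coprime orders is cyclic precisely when each factor is. It follows that $\mathcal{M}(G)$ consists exactly of the subgroups $M_1 \times \cdots \times M_r$ with $M_i \in \mathcal{M}(P_i)$, and hence $\mathrm{Dom}(\mathcal{P}_E(G)) = \prod_{i=1}^{r} \Big( \bigcap_{M_i \in \mathcal{M}(P_i)} M_i \Big) = \prod_{i=1}^{r} \mathrm{Dom}(\mathcal{P}_E(P_i))$. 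So the task becomes: identify $\mathrm{Dom}(\mathcal{P}_E(P)) = \bigcap_{M \in \mathcal{M}(P)} M$ for a $p$-group $P$.

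If $P$ is cyclic then $\mathcal{M}(P) = \{P\}$ and the intersection is $P$. If $P$ is non-cyclic, I claim the intersection is trivial unless $P$ is a generalized quaternion group, in which case it is the unique subgroup of order $2$ (equivalently, $\{y \in P : o(y) \le 2\}$). For the first part, suppose $z \ne e$ lies in every $M \in \mathcal{M}(P)$; replacing $z$ by a suitable power we may assume $o(z) = p$. Any element $w$ of order $p$ lies in some $M \in \mathcal{M}(P)$, which also contains $z$; since the cyclic group $M$ has a unique subgroup of order $p$, we get $\langle w\rangle = \langle z\rangle$. Hence $P$ has a unique subgroup of order $p$, i.e. $C_p = 1$, and then Corollary \ref{pgroupclasscorollary} forces $P$ to be cyclic or generalized quaternion; being non-cyclic, $P$ is generalized quaternion. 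In $\mathbb{Q}_{2^{k+1}}$ the unique involution $x^{2^{k-1}}$ is central and lies in every cyclic subgroup of even order, hence in every maximal cyclic subgroup, whereas a direct check shows every element of order $\ge 4$ is missed by at least one maximal cyclic subgroup; thus $\mathrm{Dom}(\mathcal{P}_E(\mathbb{Q}_{2^{k+1}})) = \{y : o(y) \le 2\}$.

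Finally I would run the five structural forms of a non-cyclic nilpotent group through the product formula. The factor $G'$, all of whose Sylow subgroups are non-cyclic of odd order, contributes $\{e'\}$ (each such Sylow subgroup satisfies $C_p \equiv 1 + p \pmod{p^2}$ by Theorem \ref{pgroupclass}, hence $C_p \ge 1 + p \ge 3$, so is not of the exceptional type); a $2$-group $P$ that is neither cyclic nor of maximal class contributes $\{e_1\}$ for the same reason; $\mathbb{D}_{2^{k+1}}$ and $\mathbb{SD}_{2^{k+1}}$, having several involutions, contribute $\{e_3\}$ and $\{e_4\}$; $\mathbb{Q}_{2^{k+1}}$ contributes $\{e_2\}$ together with its unique involution; and the cyclic factor $\mathbb{Z}_n$ contributes all of $\mathbb{Z}_n$. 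Multiplying these across the five cases yields precisely the displayed description of $\mathrm{Dom}(\mathcal{P}_E(G))$, with $D_1 \cup D_2$ arising exactly from the generalized quaternion case. I expect the one genuinely delicate point to be the clean justification that the maximal cyclic subgroups of a nilpotent group are the componentwise products of maximal cyclic subgroups of the Sylow factors (together with the accompanying ``cyclic iff each coordinate cyclic'' step); once that is in place, the rest is the short intersection argument plus bookkeeping over the known classification of nilpotent groups and an appeal to Theorem \ref{pgroupclass} / Corollary \ref{pgroupclasscorollary}.
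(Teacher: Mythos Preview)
The paper does not prove this statement at all: Theorem~\ref{Dominating nilpotent} is quoted verbatim from \cite[Theorem~4.1]{a.bera2021connectivitydomin} and used as a black box, so there is no ``paper's own proof'' to compare against.

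Your argument is correct and self-contained. The identification $\mathrm{Dom}(\mathcal{P}_E(G)) = \bigcap_{M \in \mathcal{M}(G)} M$ is valid for any finite group, and the componentwise splitting of maximal cyclic subgroups in a nilpotent group (which you rightly flag as the one point needing care) follows from the fact that in $G = P_1 \times \cdots \times P_r$ every element's $P_i$-components are powers of that element, so every subgroup factors through the Sylow components. The reduction to $p$-groups and the use of the classical fact that a $p$-group with a unique subgroup of order $p$ is cyclic or generalized quaternion (equivalently, your appeal to Corollary~\ref{pgroupclasscorollary} together with the listed values of $C_2$) cleanly isolates $\mathbb{Q}_{2^{k+1}}$ as the only non-cyclic Sylow factor contributing more than the identity. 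One cosmetic remark: when you write ``Corollary~\ref{pgroupclasscorollary} forces $P$ to be cyclic or generalized quaternion,'' it would be clearer to say that $C_p = 1$ first places $P$ among the four families of the corollary, and then the recorded values $C_2 = 1 + 2^k$ (dihedral) and $C_2 = 1 + 2^{k-1}$ (semidihedral) eliminate those two, leaving only the cyclic and generalized quaternion cases.
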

\begin{lemma}{\label{lcm}}
Let $G=P_1\times P_2\times \cdots \times P_r$ be a finite nilpotent group of order $n=p_1^{\alpha _1}p_2^{\alpha _2}\cdots p_r^{\alpha _r}$. Suppose $x,y \in G$ such that $o(x)=s$ and  $o(y)=t$. Then there exists an element $z\in G$ such that $o(z)=\mathrm{lcm}(s,t)$.
\end{lemma}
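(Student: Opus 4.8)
The plan is to exploit the decomposition of $G$ as the internal direct product of its Sylow subgroups, which is part of the hypothesis (and is guaranteed for any finite nilpotent group by Theorem~\ref{nilpotent}), and to handle one prime at a time. Write $x=(x_1,\dots,x_r)$ and $y=(y_1,\dots,y_r)$ with $x_i,y_i\in P_i$. The first step is to record the elementary fact that, since $o(x_1),\dots,o(x_r)$ are powers of the pairwise distinct primes $p_1,\dots,p_r$, they are pairwise coprime, so that $o(x)=\mathrm{lcm}(o(x_1),\dots,o(x_r))=\prod_{i=1}^r o(x_i)$. Hence $s=\prod_{i=1}^r p_i^{a_i}$, where $p_i^{a_i}=o(x_i)$, and likewise $t=\prod_{i=1}^r p_i^{b_i}$ with $p_i^{b_i}=o(y_i)$; consequently $\mathrm{lcm}(s,t)=\prod_{i=1}^r p_i^{\max(a_i,b_i)}$.

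The second step is the construction of $z$. For each $i$ set $z_i=x_i$ if $a_i\ge b_i$ and $z_i=y_i$ otherwise, so that $z_i\in P_i$ and $o(z_i)=p_i^{\max(a_i,b_i)}$. Putting $z=(z_1,\dots,z_r)\in G$ and applying the same coprimality observation once more gives $o(z)=\prod_{i=1}^r o(z_i)=\prod_{i=1}^r p_i^{\max(a_i,b_i)}=\mathrm{lcm}(s,t)$, which is exactly what is required.

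I do not expect a genuine obstacle here: the only fact being used is that in a direct product whose factors have pairwise coprime exponents, the order of a tuple equals the product of the orders of its coordinates, and this lets one pass freely between $s$, $t$, $\mathrm{lcm}(s,t)$ and the exponents $a_i,b_i$. The argument is short arithmetic bookkeeping, and it moreover yields an explicit $z$, built coordinatewise from $x$ and $y$. The one point to state carefully is that nilpotency is used solely to secure the Sylow direct-product structure assumed in the statement; beyond that the proof is purely formal.
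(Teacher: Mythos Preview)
Your proof is correct and follows essentially the same approach as the paper: write $x$ and $y$ componentwise in the Sylow decomposition, read off the prime-power exponents of $s$ and $t$, and then assemble $z$ coordinatewise by choosing whichever of $x_i,y_i$ has the larger order. The only difference is cosmetic (you use $a_i,b_i$ where the paper uses $\beta_i,\gamma_i$), and you spell out the coprimality justification for $o(x)=\prod_i o(x_i)$ a bit more explicitly than the paper does.
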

\begin{proof}
Let $x=(x_1,x_2,\ldots ,x_r)$, $y=(y_1,y_2 ,\ldots ,y_r)\in G$. It follows that $s=p_1^{\beta _1}p_2^{\beta _2}\cdots p_r^{\beta _r}$ and $t=p_1^{\gamma _1}p_2^{\gamma _2}\cdots p_r^{\gamma _r}$, where $p_i^{\beta _i}=o(x_i)$ , $p_i^{\gamma _i}=o(y_i)$ and $0\leq \beta_i, \gamma_i \leq \alpha_i$. Consequently, $\mathrm{lcm}(s,t)=p_1^{\delta _1}p_2^{\delta _2}\cdots p_r^{\delta _r}$, where $\delta_i= \text{max}\{\beta_i, \gamma _i \}$. Consider $z=(z_1,z_2,\ldots ,z_r)$ such that $$z_i =  \left\{ \begin{array}{ll}
x_i & \mbox{if $\beta_i\geq \gamma_i$,}\\
y_i& \mbox{ if $\beta_i < \gamma_i$}. \end{array} \right. $$
Clearly, $z\in G$ and $o(z)=\prod \limits_{i=1} ^r o(z_i) =\prod \limits_{i=1} ^r p_i^{\delta_i}$. Thus, the result holds.
\end{proof}

\begin{remark}{\label{remark maximal}}
Let $G$ be a finite group. Then $G= \bigcup\limits_{M\in \m} M$ and the generators of a maximal cyclic subgroup does not belong to any other maximal cyclic subgroup of $G$. Consequently, if $|M_i|$ is a prime number then for distinct $M_i,  M_j \in \m$, we have $M_i \cap M_j =\{e\}$.
\end{remark}

The following results will be useful for further study.
\begin{lemma}{\label{2maximal}}
If $G$ is a finite group, then $| \m |\neq 2$.
\end{lemma}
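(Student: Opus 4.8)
The plan is to reduce the statement to the classical fact that a group is never the union of two of its proper subgroups, using the covering property recorded in Remark~\ref{remark maximal}. First I would dispose of the cyclic case: if $G$ is cyclic, then $G$ is its only maximal cyclic subgroup, so $|\m| = 1 \neq 2$. Hence we may assume $G$ is non-cyclic, and then every maximal cyclic subgroup of $G$ is a \emph{proper} subgroup (any maximal cyclic subgroup equal to $G$ would make $G$ cyclic).

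Next, suppose for contradiction that $\m = \{M_1, M_2\}$ with $M_1 \neq M_2$. By Remark~\ref{remark maximal} we have $G = \bigcup_{M \in \m} M = M_1 \cup M_2$. Neither inclusion $M_1 \subseteq M_2$ nor $M_2 \subseteq M_1$ can hold: on one hand maximal cyclic subgroups are pairwise incomparable, and on the other hand such an inclusion would force $G = M_1$ or $G = M_2$, contradicting non-cyclicity. So I can choose $a \in M_1 \setminus M_2$ and $b \in M_2 \setminus M_1$ and examine the product $ab \in G = M_1 \cup M_2$. If $ab \in M_1$, then $b = a^{-1}(ab) \in M_1$, a contradiction; if $ab \in M_2$, then $a = (ab)b^{-1} \in M_2$, again a contradiction. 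Therefore $ab \notin M_1 \cup M_2 = G$, which is absurd, so $|\m| \neq 2$.

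There is no genuine obstacle here; the only point requiring care is the preliminary step of ruling out the cyclic case, since that is exactly the scenario in which one of the two ``subgroups'' in the cover would fail to be proper and the union-of-two-subgroups lemma would not apply. Once both $M_1$ and $M_2$ are known to be proper, the elementary product argument closes the proof immediately.
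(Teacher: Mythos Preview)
Your proof is correct but follows a different path from the paper's. The paper argues by counting: assuming $\m=\{M_1,M_2\}$, the covering $G=M_1\cup M_2$ together with $e\in M_1\cap M_2$ gives $|M_1|+|M_2|\ge |G|+1$, while Lagrange's theorem forces $|M_i|\le |G|/2$ for each proper subgroup $M_i$, yielding the contradiction $|G|+1\le |G|$. You instead invoke (and reprove) the classical fact that no group is the union of two proper subgroups, via the product trick with $a\in M_1\setminus M_2$ and $b\in M_2\setminus M_1$. Your route is slightly more elementary---it avoids Lagrange entirely and the contradiction step does not use finiteness---whereas the paper's argument is more quantitative and directly exploits the order structure. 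Either way the preliminary handling of the cyclic case (ensuring both $M_i$ are proper) is the only delicate point, and you address it correctly.
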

\begin{proof}
On contrary, assume that the group $G$ has two maximal cyclic subgroups $M_1$ and $M_2$. Then every element of $G$ belongs to at least one of the maximal cyclic subgroup of $G$ and $e\in M_1\cap M_2$. It follows that 
$|M_1|+|M_2|\geq o(G)+1.$
Since $M_1$ and $M_2$ are proper subgroups of a finite group $G$, by Lagrange's theorem, we have $$|M_1|\leq \frac{o(G)}{2} \ \text{and} \  |M_2|\leq \frac{o(G)}{2}.$$
Consequently, we get $o(G)+1 \leq |M_1|+|M_2| \leq o(G)$, which is not possible. Hence, $| \m |\neq 2$.
\end{proof}
\begin{lemma}{\rm \cite[Lemma 2.1]{a.sarkar2022lambda}}{\label{cd2}}
Let $G$ be a finite non-cyclic simple group. Then for any $d\in \pi _G$, we have $C_d\geq 2$.
\end{lemma}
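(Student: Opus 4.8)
The plan is to reinterpret $C_d$ group-theoretically and then exploit simplicity. Observe first that the equivalence relation $\rho$ (with $x\,\rho\,y$ if and only if $\langle x\rangle=\langle y\rangle$) gives a bijection between its equivalence classes and the cyclic subgroups of $G$: the class $\rho_x$ corresponds to $\langle x\rangle$, and this is well defined and injective on classes precisely because two elements are $\rho$-related exactly when they generate the same cyclic subgroup. A class consists of elements of order $d$ if and only if the associated cyclic subgroup has order $d$. Hence $C_d$ is exactly the number of cyclic subgroups of $G$ of order $d$, and the statement to prove becomes: a finite non-cyclic simple group has at least two cyclic subgroups of each order occurring in $\pi_G$.

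Next I would argue by contradiction. Suppose $C_d=1$ for some $d\in\pi_G$, so that $G$ has a \emph{unique} cyclic subgroup $H$ of order $d$. Since conjugation by any $g\in G$ is an automorphism of $G$, the subgroup $gHg^{-1}$ is again cyclic of order $d$; by uniqueness $gHg^{-1}=H$ for every $g\in G$, so $H\trianglelefteq G$. Because $G$ is simple, $H=\{e\}$ or $H=G$. The first case is impossible: $d\in\pi_G$ means $d=o(g)$ for some $g\neq e$, so $d\geq 2$ and $|H|=d>1$. Thus $H=G$, i.e.\ $G$ is cyclic, contradicting the hypothesis. Therefore $C_d\neq 1$.

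Finally, since $d\in\pi_G$ there is an element $g\neq e$ with $o(g)=d$, so $\langle g\rangle$ is a cyclic subgroup of order $d$ and hence $C_d\geq 1$. Combining this with $C_d\neq 1$ yields $C_d\geq 2$, as required. There is no serious obstacle in this argument; the only points that need care are the identification of $C_d$ with the number of cyclic subgroups of order $d$, and the observation $d\geq 2$, which is exactly what rules out the trivial normal subgroup. Notably, the proof uses nothing from the classification of finite simple groups — only the definition of simplicity together with the fact that automorphisms preserve element orders.
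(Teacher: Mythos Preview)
Your argument is correct. The paper does not supply its own proof of this lemma; it is quoted verbatim from \cite[Lemma~2.1]{a.sarkar2022lambda} and used as a black box. Your proof --- identifying $C_d$ with the number of cyclic subgroups of order $d$, then observing that a unique such subgroup would be characteristic (hence normal) and invoking simplicity --- is the natural one and is essentially the argument given in the cited source.
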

\begin{theorem}{\rm \cite[Theorem 2.4]{a.Bera2017}}{\label{complete}}
The enhanced power graph $\a$   of the group $G$  is complete if and only if $G$ is cyclic.
\end{theorem}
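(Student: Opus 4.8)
The plan is to deduce both directions directly from the description of maximal cyclic subgroups in Remark~\ref{remark maximal}, with no appeal to any classification of groups.

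\emph{The easy implication.} If $G=\langle g\rangle$ is cyclic, then any two distinct elements $x,y\in G$ satisfy $x,y\in\langle g\rangle$, so $x\sim y$ in $\a$ by the very definition of adjacency; hence $\a=K_{|G|}$ is complete.

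\emph{The converse, by contraposition.} I would assume $G$ is not cyclic and produce two non-adjacent vertices. Since $G$ is finite, every cyclic subgroup (in particular $\{e\}$) lies in a maximal cyclic subgroup, so $\m\neq\emptyset$ and $G=\bigcup_{M\in\m}M$ by Remark~\ref{remark maximal}. If $\m$ had a single member $M$, then $G=M$ would be cyclic; hence $|\m|\geq 2$ (indeed $|\m|\geq 3$ by Lemma~\ref{2maximal}, although two distinct members already suffice). I would then pick distinct $M_1,M_2\in\m$ and choose a generator $x$ of $M_1$ and a generator $y$ of $M_2$, and claim $x\nsim y$ in $\a$. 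Indeed, were $x$ and $y$ adjacent, there would exist $z\in G$ with $x,y\in\langle z\rangle$; enlarging $\langle z\rangle$ to a maximal cyclic subgroup $M$ gives $M_1=\langle x\rangle\subseteq M$ with $M$ cyclic and $M_1$ maximal cyclic, forcing $M_1=M$, and likewise $M_2=\langle y\rangle\subseteq M$ forces $M_2=M$; thus $M_1=M_2$, a contradiction. Therefore $\a$ is not complete, which is what we want.

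\emph{Main difficulty.} There is essentially none: the argument is a short comparison of maximal cyclic subgroups. The only structural input is the standard fact, implicit in Remark~\ref{remark maximal}, that in a finite group every cyclic subgroup is contained in a maximal one, which guarantees both that $\m$ is non-empty and that a generator of one maximal cyclic subgroup lies in no other. Once this is granted, the proof is two lines, so the write-up is mainly a matter of stating the two implications cleanly.
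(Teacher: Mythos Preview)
The paper does not supply its own proof of this statement: Theorem~\ref{complete} is quoted from \cite[Theorem~2.4]{a.Bera2017} in the preliminaries section without argument, so there is no in-paper proof to compare against. Your proposal is a correct and self-contained proof; the forward direction is immediate from the definition, and for the converse your use of Remark~\ref{remark maximal} to pick generators of two distinct maximal cyclic subgroups and then derive $M_1=M=M_2$ from any putative common cyclic overgroup is exactly the natural argument.
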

\begin{theorem}{\rm \cite[Theorem 14]{a.Georges1994}}{\label{Lambdain path cover}}
Let $\Gamma$ be a graph of order $n$.
\begin{itemize}
   \item[(i)] Then $\lambda(\Gamma)=n-1$ if and only if $c(\overline{\Gamma})=1$.
    \item[(ii)] Let $r$ be an integer at least $2$. Then $\lambda(\Gamma)=n+r-2$ if and only if $c(\overline{\Gamma})=r$.
\end{itemize}
\end{theorem}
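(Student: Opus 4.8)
The plan is to deduce both parts from the single identity $\lambda(\Gamma)=n+c(\overline{\Gamma})-2$, where the case $c(\overline{\Gamma})=1$ is read as $\lambda(\Gamma)=n-1$; once this is established, (i) is the instance $r=1$ and (ii) follows because $r\mapsto n+r-2$ is injective on integers $r\ge 2$. The argument is carried out for graphs in which any two distinct vertices are at distance at most $2$ — the situation in which the statement gets applied in this paper, e.g. to $\a$, which has $e$ as a dominating vertex. For such $\Gamma$ the two $L(2,1)$-conditions together force every $L(2,1)$-labeling $f$ to be injective: adjacent vertices get labels differing by at least $2$, and the remaining pairs, being at distance $2$, get distinct labels. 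Hence an $L(2,1)$-labeling is just a linear order $v_1,v_2,\dots,v_n$ of $V(\Gamma)$ with $0=f(v_1)<f(v_2)<\cdots<f(v_n)=\mathrm{span}(f)$, and in particular $\lambda(\Gamma)\ge n-1$.

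For the lower bound I would fix an optimal $L(2,1)$-labeling, order the vertices as above, and write $g_i=f(v_{i+1})-f(v_i)\ge 1$. If $g_i=1$ then $v_i$ and $v_{i+1}$ cannot be adjacent in $\Gamma$ (their labels would have to differ by $2$), so $v_iv_{i+1}\in E(\overline{\Gamma})$; and any two vertices not consecutive in this order already have labels differing by at least $2$, so these are the only binding constraints. Let $t$ be the number of indices $i$ with $g_i\ge 2$. Cutting the sequence at those $t$ places produces $t+1$ contiguous blocks, and within each block all consecutive pairs lie in $E(\overline{\Gamma})$, so the blocks form a path covering of $\overline{\Gamma}$; therefore $t+1\ge c(\overline{\Gamma})$ and $\lambda(\Gamma)=\sum_i g_i\ge (n-1-t)+2t=n-1+t\ge n+c(\overline{\Gamma})-2$.

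For the matching upper bound I would take a minimum path covering $P_1,\dots,P_r$ of $\overline{\Gamma}$, so $r=c(\overline{\Gamma})$, list the vertices in the order $P_1,P_2,\dots,P_r$ with each $P_j$ read in path order, and label them by consecutive integers inside each $P_j$, inserting a jump of $2$ when passing from the last vertex of $P_j$ to the first of $P_{j+1}$. This labeling is injective, so the distance-two condition holds automatically; and if two vertices had labels differing by exactly $1$ they would be consecutive inside some $P_j$, hence adjacent in $\overline{\Gamma}$ and thus nonadjacent in $\Gamma$, whereas an adjacent-in-$\Gamma$ pair sitting at a block boundary has labels differing by $2$. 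Its span is $\sum_{j}(|P_j|-1)+2(r-1)=(n-r)+2(r-1)=n+r-2$, giving $\lambda(\Gamma)\le n+c(\overline{\Gamma})-2$. Combining the two bounds, $\lambda(\Gamma)=n+c(\overline{\Gamma})-2$, and setting $r=c(\overline{\Gamma})=1$ and $r=c(\overline{\Gamma})\ge 2$ yields (i) and (ii).

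The span computations and the remark that non-consecutive vertices automatically meet the distance-two requirement are routine; the step needing real care is the clean two-way passage between $L(2,1)$-labelings of $\Gamma$ and path coverings of $\overline{\Gamma}$ — extracting the blocks and bounding their number by $c(\overline{\Gamma})$ in one direction, and laying out a minimum cover without ever creating an adjacent pair of $\Gamma$ at label-distance $1$ in the other. It is also worth isolating that everything hinges on injectivity of the labeling (equivalently, on the absence of vertices at distance $\ge 3$): without that hypothesis $\lambda(\Gamma)$ can fall below $n-1$ and the stated equalities break, so it must be in force here through the graphs to which the result is applied.
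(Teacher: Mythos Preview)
The paper does not supply its own proof of this statement: it is quoted verbatim from Georges--Mauro--Whittlesey (Theorem~14 of \cite{a.Georges1994}) as a background result in the preliminaries, so there is no ``paper's proof'' to compare against.

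Your argument is correct under the hypothesis you impose, namely that every pair of distinct vertices of $\Gamma$ lies at distance at most~$2$. In that regime every $L(2,1)$-labeling is injective, your gap-counting lower bound and your block-labeling upper bound are both valid, and together they give $\lambda(\Gamma)=n+c(\overline{\Gamma})-2$. Since all applications in this paper are to $\mathcal{P}_E(G)$, which has $e$ as a dominating vertex and hence diameter at most~$2$, this restricted version is exactly what is used, and you are right to flag that.

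Two points worth recording. First, the theorem of Georges--Mauro--Whittlesey is stated and proved for \emph{arbitrary} graphs, without the diameter restriction; their argument has to cope with repeated labels (vertices at distance $\ge 3$ may share a label), and the passage from an optimal labeling to a path covering of $\overline{\Gamma}$ is correspondingly more delicate than your block decomposition. So what you have written is a genuine special case, not the full result. Second, your closing remark is well taken: part~(i) as literally printed here (with ``$=$'' rather than ``$\le$'') already fails for graphs of diameter $\ge 3$ --- e.g.\ the edgeless graph on $n\ge 2$ vertices has $\lambda=0$ yet $c(\overline{\Gamma})=c(K_n)=1$ --- so the diameter hypothesis is doing real work in your version of~(i), whereas in the original source part~(i) is phrased with ``$\le n-1$'' and part~(ii) is the one that holds verbatim in general.
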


\section{Main Results} 
In this section, we present the main results of the paper. Recall that if $G$ is a finite cyclic group then the enhanced power graph $\a$ is complete. First we obtain the bounds for $\lambda(\a)$, where $G$ is a finite group. Then we classify finite simple groups $G$ such that $\lambda(\a) = |G|$. Since the set of dominating vertices in $\a$ is known (see Theorem \ref{Dominating nilpotent}), we obtain the lambda number of the enhanced power graphs of nilpotent groups (see Theorem \ref{lambda number theorem odd} and Theorem \ref{lambda number main}).

\begin{theorem}{\label{lowerbound}}
Let G be a finite group of order $n$. Then $\lambda(\a)\geq n$ with equality holds if and only if $\overline{\a \setminus \{e\}}$ contains a Hamiltonian path.
\end{theorem}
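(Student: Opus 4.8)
The plan is to reduce everything to the path–cover characterization of the $\lambda$-number in Theorem~\ref{Lambdain path cover}, exploiting the fact that the identity $e$ is a dominating vertex of $\a$ and hence an isolated vertex of the complement $\c$.

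First I would record the elementary observation that $e\in\langle x\rangle$ for every $x\in G$, so $e$ is adjacent in $\a$ to all other vertices; equivalently, $e$ is an isolated vertex of $\c$. Working with a non-trivial group (so $n\geq 2$), this forces any path covering of $\c$ to contain the one-vertex path $(e)$: deleting $(e)$ from such a covering leaves a path covering of the induced subgraph $\c\setminus\{e\}$, and conversely a path covering of $\c\setminus\{e\}$ together with $(e)$ is a path covering of $\c$. Hence $c(\c)=1+c(\c\setminus\{e\})$, and since $\c\setminus\{e\}$ has at least one vertex we obtain $c(\c)\geq 2$. Applying Theorem~\ref{Lambdain path cover}(ii) with $r=c(\c)\geq 2$ then yields $\lambda(\a)=n+c(\c)-2\geq n$, which is the claimed lower bound.

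For the equality statement, $\lambda(\a)=n$ holds precisely when $r=c(\c)=2$, i.e.\ when $c(\c\setminus\{e\})=1$. Since passing to the complement commutes with taking induced subgraphs, $\c\setminus\{e\}$ is exactly $\overline{\a\setminus\{e\}}$; and for any finite graph $H$ the equality $c(H)=1$ means precisely that $H$ admits a Hamiltonian path (this is just the definition of the path covering number, and is also the content of Theorem~\ref{Lambdain path cover}(i)). Chaining these equivalences gives $\lambda(\a)=n$ if and only if $\overline{\a\setminus\{e\}}$ contains a Hamiltonian path.

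I do not anticipate a genuine obstacle here: the argument is a direct invocation of Theorem~\ref{Lambdain path cover} once one notes that $e$ is isolated in $\c$. The only points requiring a little care are the bookkeeping identity $c(\c)=1+c(\overline{\a\setminus\{e\}})$, which rests on the isolation of $e$, and the degenerate case $n=1$, which is excluded by taking $G$ non-trivial.
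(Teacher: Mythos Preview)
Your proof is correct. The paper's own argument takes a different route: it notes that the power graph $\mathcal{P}(G)$ and the enhanced power graph $\a$ are related as spanning subgraphs (the paper actually states the containment in the wrong direction, presumably a typo), observes that the $\lambda$-number is monotone under adding edges, and then simply invokes the analogous theorem for power graphs from Ma--Feng--Wang~\cite{a.Malambda}. That reduction cleanly yields the inequality $\lambda(\a)\ge\lambda(\mathcal{P}(G))\ge n$, but the equality characterization does not follow from monotonicity alone---one really needs the direct path-cover argument you give, which is exactly what the proof in~\cite{a.Malambda} does for power graphs and which transfers verbatim since it uses only that $e$ is dominating. In short, the paper delegates the work to a citation while you spell out the underlying reasoning via Theorem~\ref{Lambdain path cover}; your version is self-contained and handles both the bound and the equality case uniformly, at the cost of a few extra lines.
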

\begin{proof}
It is well known that for a finite group $G$, the graph $\a$ is a spanning subgraph of the power graph $\mathcal{P}(G)$ and note that the lambda number is a monotone parameter. By {\rm \cite[Theorem 3.1]{a.Malambda}}, the result holds.
\end{proof}
\begin{theorem}{\label{Lambdanbrfinite}}
Let $G$ be a finite non-cyclic group of order $n$. Suppose $M_1, M_2, \ldots ,M_r$ be the maximal cyclic subgroups of $G$ such that $m_1\geq m_2 \geq \cdots \geq m_r $, where $m_i=\phi(|M_i|)$ for $1\leq i \leq r$. Then $$\lambda(\a) \leq  \left\{ \begin{array}{ll}
2n-|\g|-1; & \mbox{if $m_1\leq \sum \limits_{i=2}^r m_i$,}\\
2(n-m_1-1);& \mbox{ \text{Otherwise}. }\end{array} \right.$$
\end{theorem}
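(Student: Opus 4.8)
The plan is to translate the problem into the language of path coverings and then exhibit explicit coverings of $\overline{\a}$. Since $e$ is a dominating vertex of $\a$, it is an isolated vertex of $\overline{\a}$, so $c(\overline{\a})\geq 2$; hence by Theorem \ref{Lambdain path cover} we have $\lambda(\a)=n+c(\overline{\a})-2$, and it suffices to prove that $c(\overline{\a})\leq n-|\g|+1$ when $m_1\leq\sum_{i=2}^r m_i$ and that $c(\overline{\a})\leq n-2m_1$ otherwise: substituting these two inequalities into $n+c(\overline{\a})-2$ yields exactly the two claimed bounds.

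Before splitting into cases I would record the relevant structure of $\overline{\a}$. By Remark \ref{remark maximal} the sets $\mathcal{G}_{M_i}$ are pairwise disjoint, so $|\g|=\sum_{i=1}^r m_i$; writing $S=G\setminus(\{e\}\cup\g)$ we then have $|S|=n-1-|\g|$. If $x\in\mathcal{G}_{M_1}$ then $\langle x\rangle=M_1$ is a maximal cyclic subgroup, so $\langle x,y\rangle$ is cyclic precisely when $y\in M_1$; thus the neighbourhood of $x$ in $\overline{\a}$ is exactly $G\setminus M_1$. The same argument shows that generators of two distinct maximal cyclic subgroups are always adjacent in $\overline{\a}$, so the subgraph of $\overline{\a}$ induced on $\g$ is the complete multipartite graph $K_{m_1,m_2,\ldots,m_r}$. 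Finally, $M_1\neq G$ since $G$ is non-cyclic, so $|M_1|\leq n/2$ by Lagrange's theorem and $|M_1|\geq 2$, whence $|G\setminus M_1|=n-|M_1|\geq|M_1|\geq\phi(|M_1|)+1=m_1+1$.

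For the case $m_1\leq\sum_{i=2}^r m_i$ (which in particular forces at least two parts to be present) I would use the classical fact that a complete multipartite graph whose largest part is no larger than the union of all the other parts admits a Hamiltonian path. Applying this to $K_{m_1,\ldots,m_r}$ gives a single path covering $\g$; together with the one-vertex path $\{e\}$ and a one-vertex path for each element of $S$ this is a path covering of $\overline{\a}$ with $1+1+|S|=n-|\g|+1$ paths, as required.

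The substantive case is $m_1>\sum_{i=2}^r m_i$, where merely covering $\g$ already costs $m_1-\sum_{i\geq 2}m_i$ paths, so the ``everything else is a singleton'' strategy is hopelessly lossy and one must do something smarter. The key idea is to spend connecting vertices from $G\setminus M_1$ so as to make \emph{every} generator of $M_1$ an internal vertex of a single long path: listing $\mathcal{G}_{M_1}=\{a_1,\ldots,a_{m_1}\}$ and choosing distinct $b_1,\ldots,b_{m_1+1}\in G\setminus M_1$ (possible because $|G\setminus M_1|\geq m_1+1$), the sequence $b_1,a_1,b_2,a_2,\ldots,b_{m_1},a_{m_1},b_{m_1+1}$ is a path in $\overline{\a}$, since each $a_j$ is adjacent there to every vertex outside $M_1$. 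Covering the remaining $n-(2m_1+1)$ vertices --- a set that contains $e$ --- by one-vertex paths then produces a path covering of $\overline{\a}$ of size $1+(n-2m_1-1)=n-2m_1$; this count is legitimate because $m_1\leq|M_1|-1\leq n/2-1$ gives $n-2m_1\geq 2$. I expect this second construction to be the main obstacle: one has to be careful that the long path together with the singleton paths really is a vertex-disjoint covering of the whole of $G$, and the availability of the long path rests entirely on the identity $N_{\overline{\a}}(x)=G\setminus M_1$ for $x\in\mathcal{G}_{M_1}$. With both cases settled, Theorem \ref{Lambdain path cover} delivers the two bounds.
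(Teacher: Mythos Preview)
Your proposal is correct and follows essentially the same strategy as the paper: both reduce to bounding $c(\overline{\a})$ via Theorem~\ref{Lambdain path cover}, and in Case~2 your alternating path $b_1,a_1,\ldots,a_{m_1},b_{m_1+1}$ through $G\setminus M_1$ is exactly the paper's construction. The only difference is in Case~1: the paper builds the Hamiltonian path on $\g$ by hand, splitting into subcases $m_1=m_2$ and $m_1>m_2$ (the latter requiring a somewhat delicate partition $A_1\cup A_2\cup A_3$ of $\g$), whereas you simply observe that the induced subgraph on $\g$ is $K_{m_1,\ldots,m_r}$ and invoke the standard criterion for a Hamiltonian path in a complete multipartite graph. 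Your route is cleaner here and avoids the case analysis, at the cost of quoting an external fact rather than giving an explicit path.
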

\begin{proof}
We prove this result by finding an upper bound of the path covering number of $\c$. We discuss the following two possible cases.

\noindent\textbf{Case-1:} $m_1\leq \sum \limits_{i=2}^r m_i$. We discuss this case into two subcases.

\textbf{Subcase-1.1:} $m_1=m_2$. Now we provide a Hamiltonian path in the subgraph of $\c$ induced by the set $\g$. Since the generators of two distinct maximal cyclic subgroups are adjacent in $\c$, note that the path $P  : x_{1,1}\sim x_{1,2}\sim \cdots \sim x_{1,r}\sim x_{2,1} \sim x_{2,2} \sim \cdots \sim x_{m_s, s}$, where $\langle x_{i,j} \rangle = M_j$, $1\leq i \leq m_j$ and $s= \text{max}\{t : 2\leq t \leq r ,  \ m_t=m_1\}$, covers all the vertices of $\g$ in $\c$. It follows that $c(\c) \leq n-|\g|+1$. By Theorem \ref{Lambdain path cover}, we have  $\lambda(\a)\leq 2n-|\g|-1$.


\textbf{Subcase-1.2:} $m_1 >m_2$. Since $\g= \bigcup \limits_{i=1}^r \mathcal{G}_{M_i}$, we consider $A_1=\{a_1,a_2,\ldots , a_{m_1-m_2}\} \subseteq \mathcal{G}_{M_1}$. In $A_2$, we collect the $m_1-m_2$ elements starting from $\mathcal{G}_{M_r}$. If $m_r\geq m_1-m_2$, then we take $A_2\subseteq \mathcal{G}_{M_r}$ such that $|A_2|=m_1-m_2$. Otherwise, we collect remaining $(m_1-m_2)-m_r$ elements from $\mathcal{G}_{M_{r-1}}$ and then choose remaining elements, if required, such that $|A_2|=m_1-m_2$, from $\mathcal{G}_{M_{r-2}},\mathcal{G}_{M_{r-3}}$ and so on. We write $A_2=\{b_1,b_2,\ldots ,b_{m_1-m_2}\}$. Further, consider the set $A_3=\g \setminus (A_1\cup A_2)$. In view of the above given partition of $\g$, now we provide a Hamiltonian path in subgraph of $\c$ induced by the set $\g$. 
Since the generators of distinct maximal cyclic subgroups are adjacent in $\c$. Thus, we have a Hamiltonian path $P: a_1\sim b_1 \sim a_2 \sim \cdots \sim a_{m_1-m_2} \sim b_{m_1-m_2}$ in the subgraph induced by the set $A_1\cup A_2$. Notice that the subgraph $\Gamma$ induced by the set $A_3$ in $\c$ is a complete $t$-partite graph, where $t=\mathrm{max}\{i: \mathcal{G}_{M_i}\cap A_3 \neq \varnothing \}$ and the partition set of $\Gamma$ is $\mathcal{G}_{M_1}\setminus {A_1}, \mathcal{G}_{M_2},\ldots , \mathcal{G}_{M_t}\setminus {A_2}$. Since $|\mathcal{G}_{M_1}\setminus {A_1}|=|\mathcal{G}_{M_2}|\geq |\mathcal{G}_{M_i}|$  for $3\leq i \leq t$, we have a Hamiltonian path $H'$ of $\Gamma$ with initial vertex $x$ belongs to $\mathcal{G}_{M_1}$. Since $b_{m_1-m_2}\in \mathcal{G}_{M_k}$ for some $k$, where $t\leq k \leq r$, we have $b_{m_1-m_2}\sim x$. Consequently, we get a Hamiltonian path in the subgraph induced by the set $\g$ in $\c$. Thus $c(\c) \leq n-|\g|+1$. By Theorem \ref{Lambdain path cover}, $\lambda(\a)\leq 2n-|\g|-1$. 

\noindent\textbf{Case-2:} $m_1> \sum \limits_{i=2}^r m_i$. Since $G$ is a non-cyclic group, it implies that $M_1$ is a proper subgroup of $G$. By consequence of Lagrange's theorem, $|M_1|\leq \frac{n}{2}$ and so $|\mathcal{G}_{M_1}|< \frac{n}{2}$. Notice that each element of $\mathcal{G}_{M_{1}}$ is adjacent to every element of $G\setminus M_1$ in $\c$. Thus, for $\langle x_i\rangle = M_1$ and $y_i\in G\setminus M_1$, we have a path $P:y_1\sim x_1\sim y_2 \sim  \cdots \sim x_{m_1}\sim y_{m_1+1}$ of length $2m_1+1$ in $\c$. Consequently,   $c(\c) \leq n-2m_1$. Hence, by Theorem \ref{Lambdain path cover}, $\lambda(\a)\leq 2n-2m_1-2$. 
\end{proof}
 In view of Lemma \ref{2maximal}, we have the following corollary of Theorem \ref{Lambdanbrfinite}.
 \begin{corollary}
 Let $G$ be a finite non-cyclic group of order $n$. Then $\lambda (\a) \leq 2n-4$, with equality holds if and only if $G$ is isomorphic to $\mathbb{Z}_2 \times \mathbb{Z}_2$.
 \end{corollary}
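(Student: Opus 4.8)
The plan is to derive the corollary directly from Theorem \ref{Lambdanbrfinite} together with Lemma \ref{2maximal}. Since $G$ is non-cyclic, Theorem \ref{complete} tells us $\a$ is not complete, so $G$ has more than one maximal cyclic subgroup; by Lemma \ref{2maximal} we cannot have exactly two, hence $r = |\m| \geq 3$. I would split into the two cases of Theorem \ref{Lambdanbrfinite}. In the case $m_1 > \sum_{i=2}^r m_i$, the theorem gives $\lambda(\a) \leq 2(n - m_1 - 1) = 2n - 2m_1 - 2$; since $m_1 = \phi(|M_1|) \geq 1$, this is at most $2n - 4$. In the case $m_1 \leq \sum_{i=2}^r m_i$, the theorem gives $\lambda(\a) \leq 2n - |\g| - 1$, and since $r \geq 3$ we have $|\g| = \sum_{i=1}^r m_i \geq 3$ (each $m_i \geq 1$), so again $\lambda(\a) \leq 2n - 4$. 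This establishes the inequality.

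For the equality characterization, I would analyze when each of the two bounds can actually equal $2n-4$. In the first case, equality forces $m_1 = 1$, i.e. $\phi(|M_1|) = 1$, so $|M_1| \in \{1,2\}$; since $M_1$ is the maximal cyclic subgroup of largest $\phi$-value and $G$ is non-trivial, $|M_1| = 2$, which forces $|M_i| = 2$ for all $i$ (as $m_i \leq m_1 = 1$), hence every non-identity element has order $2$ and $G$ is an elementary abelian $2$-group. But then $m_1 = 1 > \sum_{i=2}^r m_i = r - 1$ forces $r = 1$, contradicting $r \geq 3$; so in fact equality cannot be attained through Case 2 — I should double-check this against the possibility that we are not literally in the hypothesis $m_1 > \sum_{i=2}^r m_i$. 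In the second case, equality forces $|\g| = 3$, and since each of the $r \geq 3$ subgroups contributes at least one generator beyond possible overlaps — actually $|\g| = \sum_{i=1}^r \phi(|M_i|)$ because distinct maximal cyclic subgroups share no generator (Remark \ref{remark maximal}) — we get $r = 3$ and $\phi(|M_i|) = 1$ for each $i$, so $|M_i| = 2$ for all three. Thus $G$ is generated by three involutions, pairwise intersecting trivially, with $|G| = 3 \cdot 1 + 1 = 4$ (counting: identity plus one generator from each $M_i$); this is exactly $\mathbb{Z}_2 \times \mathbb{Z}_2$.

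Conversely, I would verify directly that $\lambda(\overline{\mathcal{P}_E(\mathbb{Z}_2 \times \mathbb{Z}_2)}) = 2n - 4 = 4$. For $G = \mathbb{Z}_2 \times \mathbb{Z}_2$, the enhanced power graph $\a$ is the star $K_{1,3}$ (the identity is dominating, the three involutions are mutually non-adjacent since no cyclic subgroup contains two of them), so $\c$ is the disjoint union of an isolated vertex and a triangle $K_3$. Its complement has path covering number $c(\c) = 2$ (one path for the triangle, which is $K_3$ and has a Hamiltonian path; one trivial path for the isolated vertex — wait, the isolated vertex of $\c$ is dominating in $\a$, and one checks $c(\c) = 2$), and then Theorem \ref{Lambdain path cover}(ii) with $r = 2$ gives $\lambda(\a) = n + r - 2 = 4 + 2 - 2 = 4 = 2n - 4$. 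The main obstacle I anticipate is the bookkeeping in the equality analysis: one must be careful that the two upper-bound expressions in Theorem \ref{Lambdanbrfinite} are only valid in their respective regimes, so ruling out Case 2 requires checking that $m_1 = 1$ is incompatible with $m_1 > \sum_{i=2}^r m_i$ once $r \geq 3$, and the forward direction of the equality must be argued using whichever bound actually applies to the given $G$ rather than treating both bounds as simultaneously available.
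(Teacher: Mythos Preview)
Your proposal is correct and follows essentially the same route as the paper: both deduce $r=|\m|\geq 3$ from Lemma~\ref{2maximal}, then feed this into the two cases of Theorem~\ref{Lambdanbrfinite} to obtain $\lambda(\a)\leq 2n-4$, and for the equality direction both reduce to $|\g|=3$ and hence $G\cong\mathbb{Z}_2\times\mathbb{Z}_2$. The only substantive difference is in the converse verification: the paper simply invokes the lower bound $\lambda(\a)\geq n=4$ from Theorem~\ref{lowerbound} and combines it with the already-established upper bound $\lambda(\a)\leq 2n-4=4$, which is slightly quicker than your direct computation of $c(\c)=2$ via Theorem~\ref{Lambdain path cover}. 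Your more explicit treatment of why the second case of Theorem~\ref{Lambdanbrfinite} cannot yield equality (the paper glosses over this) is a welcome clarification; just tighten the wording so it is clear you are using $2n-2m_1-2\geq\lambda(\a)=2n-4$ as an inequality on the bound, not assuming the bound is attained.
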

 \begin{proof}
 Since $G$ is a non-cyclic group, by Lemma \ref{2maximal}, we get $|\g|\geq 3$. Consequently, by Theorem \ref{Lambdanbrfinite}, $\lambda (\a) \leq 2n-4$. If $G$ is isomorphic to $\mathbb{Z}_2 \times \mathbb{Z}_2$, then $\lambda (\a)\geq 4=2o(G)-4$ and so $\lambda (\a)= 2n-4$. We now suppose that $\lambda (\a)=2n-4$. This is possible only when $|\g|=3$. Note that for a non-cyclic group $G$, $|\g|=3$ if and only if $G$ has exactly three maximal cyclic subgroups each with having only one generator. By Remark \ref{remark maximal}, $o(G)=4$. Thus, we must have $G\cong \mathbb{Z}_2\times \mathbb{Z}_2$.
 \end{proof}


 Now we classify finite simple groups $G$ such that $\lambda (\a)=|G|$. For this purpose, first we derive the following two lemmas. 
 
 \begin{lemma}{\label{pi G}}
 Let $G$ be a finite non-cyclic simple group. Then for any $d\in \pi_G$, there exists a Hamiltonian path in subgraph of $\c$ induced by the set  $\tau_d$.
 \end{lemma}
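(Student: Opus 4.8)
The plan is to exhibit an explicit Hamiltonian path on the vertex set $\tau_d$ inside $\c$, i.e.\ a listing of all elements of order $d$ in which consecutive elements are non-adjacent in $\a$. Recall that two distinct vertices $x,y$ are adjacent in $\a$ precisely when $\langle x,y\rangle$ is cyclic; in particular, if $x,y$ both have order $d$ and lie in the same $\rho$-class then $\langle x\rangle=\langle y\rangle$ and they are adjacent, whereas if $\langle x\rangle\neq\langle y\rangle$ they may or may not be adjacent. The key structural input is Lemma~\ref{cd2}: since $G$ is a finite non-cyclic simple group and $d\in\pi_G$, we have $C_d\geq 2$, so the elements of $\tau_d$ are distributed among at least two distinct $\rho$-classes $\rho_{x_1},\rho_{x_2},\ldots,\rho_{x_{C_d}}$, each of size $\phi(d)$.

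First I would record that within the induced subgraph of $\c$ on $\tau_d$, any two vertices lying in \emph{different} $\rho$-classes of order $d$ are... not necessarily non-adjacent — so the naive "complete multipartite" picture is false, and this is exactly where the argument needs care. Instead, the right move is to partition $\tau_d$ by the \emph{maximal cyclic subgroups} containing the relevant elements: by Remark~\ref{remark maximal}, a generator of a maximal cyclic subgroup $M$ is adjacent in $\a$ only to elements of $M$, hence two generators of distinct maximal cyclic subgroups are non-adjacent in $\a$, i.e.\ adjacent in $\c$. So the cleanest route is to first handle the case where every element of $\tau_d$ generates a maximal cyclic subgroup (equivalently $d$ is a maximal element of $\pi_G$ under divisibility): then the subgraph of $\c$ on $\tau_d$ contains the complete multipartite graph with parts the $\rho$-classes, each of equal size $\phi(d)$, and since there are $C_d\geq 2$ such parts of equal size a complete multipartite graph with $\geq 2$ equal parts is Hamiltonian-connected (in particular Hamiltonian), giving the path. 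For general $d$, I would take, for each element $x\in\tau_d$, a maximal cyclic subgroup $M_x\supseteq\langle x\rangle$, and use that $x$ is non-adjacent in $\a$ to every element outside $M_x$; arranging the $\rho$-classes so that consecutive classes in the path can always be chosen to have non-equal generating maximal subgroups.

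Concretely, the steps in order: (1) invoke Lemma~\ref{cd2} to get $C_d\geq 2$ and fix representatives of the $\rho$-classes of order $d$; (2) show that the induced subgraph of $\c$ on $\tau_d$ contains a complete bipartite (or multipartite) graph whose parts are unions of $\rho$-classes assigned to distinct maximal cyclic subgroups, each part of size a multiple of $\phi(d)$, and with at least two parts — using Remark~\ref{remark maximal} to guarantee the cross edges in $\c$; (3) balance: since every $\rho$-class has the same size $\phi(d)$ and there are at least two of them, one can always split $\tau_d$ into two sets of sizes differing by at most $\phi(d)\le$ the smaller side plus one, and a complete bipartite graph $K_{a,b}$ with $|a-b|\le 1$ has a Hamiltonian path; then refine if three or more maximal subgroups are involved by the standard zig-zag construction used in Subcase~1.1 and Subcase~1.2 of Theorem~\ref{Lambdanbrfinite}; (4) conclude that this Hamiltonian path lies in the subgraph of $\c$ induced by $\tau_d$.

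The main obstacle I expect is step~(2)–(3): elements of order $d$ that are \emph{not} generators of maximal cyclic subgroups can be mutually adjacent in $\a$ even across different $\rho$-classes (they may lie together in a larger cyclic group), so I cannot simply treat distinct $\rho$-classes as an independent set in $\a$. The fix is to group the $\rho$-classes according to which maximal cyclic subgroup we assign their elements to, and to verify that elements sitting in \emph{different} maximal cyclic subgroups are genuinely non-adjacent in $\a$ (immediate from Remark~\ref{remark maximal} when at least one is a generator, and otherwise one argues that a common cyclic overgroup would be contained in a single maximal cyclic subgroup). Once the vertex set $\tau_d$ is organized into at least two "blocks" that are pairwise completely joined in $\c$, with block sizes all divisible by $\phi(d)$ and hence sufficiently balanced, the Hamiltonian path is produced by the same alternating/zig-zag argument already used in the proof of Theorem~\ref{Lambdanbrfinite}.
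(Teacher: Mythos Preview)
Your proposal contains a genuine error at the pivotal point. You assert that two elements of order $d$ lying in different $\rho$-classes ``may lie together in a larger cyclic group'' and hence be adjacent in $\a$, so that ``the naive `complete multipartite' picture is false''. This is precisely backwards. If $x,y\in\tau_d$ are adjacent in $\a$, then $x,y\in\langle z\rangle$ for some $z\in G$; but a finite cyclic group has a \emph{unique} subgroup of each order dividing its order, so $\langle x\rangle=\langle y\rangle$ and $x,y$ lie in the \emph{same} $\rho$-class. Contrapositively, elements of $\tau_d$ in distinct $\rho$-classes are never adjacent in $\a$, hence always adjacent in $\c$. Thus the induced subgraph of $\c$ on $\tau_d$ is exactly the complete $C_d$-partite graph with all parts of size $\phi(d)$, and since $C_d\geq 2$ by Lemma~\ref{cd2}, a Hamiltonian path is immediate.

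This is the paper's argument in full; it is three lines long. Your elaborate detour through maximal cyclic subgroups, block decompositions, and the zig-zag machinery of Theorem~\ref{Lambdanbrfinite} is entirely unnecessary, and worse, it is built to work around a difficulty that does not exist. The moment you wrote ``they may lie together in a larger cyclic group'' you should have paused: two elements of the same order inside a cyclic group generate the same cyclic subgroup, full stop. Once you see this, the lemma is trivial.
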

 \begin{proof}
  In view of Lemma \ref{cd2}, suppose $C_d=s$, where $ s\geq 2$. Let $\mathcal{H}_d=\{\rho _1, \rho _2,\ldots , \rho _s\}$ be the set of all cyclic classes of elements of order $d$. Let $x,y\in \rho _i$, where $i\in \{1,2,\ldots , s\}$. Then $x,y \in \langle x\rangle $ and so $x\sim y$ in $\a$. Consequently, $x\nsim y$ in $\c$. Also for $i\neq j$, let $x\in \rho _i$ and $y\in \rho _j$. Let $x\sim y $ in $\a$. Then there exists $z\in G$ such that $x,y \in\langle z\rangle$. Since $o(x)=o(y)=d$, we obtain $\langle x \rangle = \langle y \rangle$, which is not possible. Thus, $x\nsim y$ in $\a$ and so $x\sim y$ in $\c$. It follows that the subgraph of $\c$ induced by the set $\tau_d$ is a complete $s$-partite graph such that the size of each partition set is $\phi (d)$. Hence, the result holds.
 \end{proof}
 \begin{lemma}{\label{x join y}}
 Let $G$ be a finite non-cyclic simple group and $d_1,d_2 \in \pi_G$. For each $x \in \tau _{d_1}$, there exists $y\in \tau_{d_2}$ such that $x\sim y $ in $\c$.
 \end{lemma}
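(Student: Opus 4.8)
The plan is to use the fact that in a finite group, the elements of $\tau_{d_1}$ and $\tau_{d_2}$ live inside maximal cyclic subgroups, and to exploit Lemma \ref{cd2} (which guarantees $C_d \geq 2$ for every $d \in \pi_G$) to find, for a prescribed $x$, a companion $y$ of order $d_2$ that is \emph{non}-adjacent to $x$ in $\a$ — equivalently, adjacent in $\c$. First I would fix $x \in \tau_{d_1}$ and let $M = \langle x \rangle$, a cyclic subgroup of order $d_1$. If $d_2 \nmid d_1$ then no element of order $d_2$ lies in $M$, so I would pick a maximal cyclic subgroup $M'$ containing some element $w$ of order $d_2$ (such $w$ exists because $d_2 \in \pi_G$, and such $M'$ exists by Remark \ref{remark maximal}); since $M' \ne M$ and the group $G$ is non-cyclic, a suitable generator-type argument shows $x$ and some element $y \in \tau_{d_2} \cap M'$ cannot simultaneously lie in a common cyclic subgroup (else $o(x)=o(y)$ would force equality of the cyclic groups, contradicting $o(x)\ne o(y)$ when $d_1 \ne d_2$, or contradicting maximality/distinctness otherwise), so $x \nsim y$ in $\a$, i.e. $x \sim y$ in $\c$.

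The genuinely delicate case is $d_2 \mid d_1$ (in particular $d_2 = d_1$), because then $M$ itself contains elements of order $d_2$, and those are adjacent to $x$ in $\a$ — useless for us. Here I would invoke Lemma \ref{cd2}: since $C_{d_2} \geq 2$, there are at least two distinct cyclic classes $\rho, \rho'$ of elements of order $d_2$. At most one of these can be "tied to $x$" in the sense that its elements share a common cyclic overgroup with $x$; more precisely, if $y \in \tau_{d_2}$ satisfies $x \sim y$ in $\a$, then $x, y \in \langle z \rangle$ for some $z$, and then $\langle y \rangle$ is the unique subgroup of order $d_2$ in the cyclic group $\langle z \rangle$, which is in turn determined (up to finitely many choices) by $x$. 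The argument to make rigorous is that not \emph{all} elements of order $d_2$ can be adjacent to $x$: I would argue that if every $y \in \tau_{d_2}$ were adjacent to $x$ in $\a$, then every order-$d_2$ subgroup would be contained in a cyclic group also containing $x$, which — combined with $C_{d_2}\geq 2$ forcing at least two distinct such subgroups $H_1, H_2$ — would produce cyclic subgroups $\langle z_1 \rangle \supseteq \{x\} \cup H_1$ and $\langle z_2 \rangle \supseteq \{x\} \cup H_2$; since cyclic groups have a unique subgroup of each order, $H_1$ and $H_2$ are distinct order-$d_2$ subgroups of $G$, but I would then derive a contradiction with the simplicity or the maximal-cyclic structure of $G$ by locating both inside maximal cyclic subgroups and counting, mirroring the style of Lemma \ref{2maximal}. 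Whichever way the bookkeeping goes, the upshot is: there exists $y \in \tau_{d_2}$ with $x \nsim y$ in $\a$.

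Concluding: in every case I exhibit $y \in \tau_{d_2}$ with $x \nsim y$ in $\a$, hence $x \sim y$ in $\c$, which is exactly the claim. The main obstacle I anticipate is handling $d_2 = d_1$ cleanly — one must rule out the pathological possibility that the prescribed $x$ is adjacent in $\a$ to \emph{every} element of order $d_2$; the non-cyclic simple hypothesis enters precisely here through Lemma \ref{cd2}, and the cleanest route is probably to show directly that the elements of a cyclic class $\rho_j$ with $x \notin \langle w \rangle$ for $w$ generating the order-$d_2$ subgroup in $\rho_j$'s overgroup are non-adjacent to $x$, using that $o(x) = o(y) = d_2$ forces $\langle x \rangle = \langle y \rangle$ under adjacency and that at least two distinct cyclic classes are available.
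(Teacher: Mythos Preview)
Your approach has a genuine gap that is not merely bookkeeping. You aim to derive the conclusion using only Lemma~\ref{cd2} (that is, $C_{d_2}\ge 2$) together with cyclic-subgroup structure, and you say explicitly that ``the non-cyclic simple hypothesis enters precisely here through Lemma~\ref{cd2}''. But $C_{d_2}\ge 2$ by itself is not strong enough. Take $H=A_4\times\mathbb{Z}_2$ (not simple, but with $C_3=4\ge 2$), let $x=(e,1)\in\tau_2$ and $d_2=3$. For every $3$-cycle $\sigma\in A_4$ the element $(\sigma,1)$ has order~$6$ and $\langle(\sigma,1)\rangle$ contains both $(e,1)$ and $(\sigma,0)$; hence $x$ is adjacent in $\mathcal{P}_E(H)$ to \emph{every} element of order~$3$. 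Any argument using simplicity only via Lemma~\ref{cd2} would therefore ``prove'' the lemma for $H$ as well, which is false. Your Case~1 ($d_2\nmid d_1$) is exactly where this bites: the phrases ``a suitable generator-type argument'' and ``contradicting maximality/distinctness'' do not pin down a valid mechanism, and the example shows that none exists at that level of generality. (Your Case~2, $d_2\mid d_1$, is essentially correct: there one really does get $\langle y\rangle\subseteq\langle x\rangle$ whenever $x\sim y$ in $\a$, so at most one cyclic class of order $d_2$ can be adjacent to $x$, and $C_{d_2}\ge2$ finishes it.)

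The paper's proof bypasses all of this by using simplicity directly rather than through Lemma~\ref{cd2}. The key observation is that adjacency in $\a$ forces commutativity, so it suffices to find $y\in\tau_{d_2}$ with $xy\ne yx$. If no such $y$ existed, then $x$ would centralize $\langle\tau_{d_2}\rangle$; but $\tau_{d_2}$ is closed under conjugation, so $\langle\tau_{d_2}\rangle$ is a nontrivial normal subgroup of $G$, hence equals $G$ by simplicity, forcing $x\in Z(G)$ and $G$ abelian --- a contradiction. This is a short argument once one notes that non-commuting implies non-adjacent in $\a$; your counting route cannot be completed without importing this (or an equivalent) direct use of simplicity.
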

 \begin{proof}
 To prove this result, it is sufficient to show that there exists $y \in \tau _{d_2}$ such that $xy\neq yx$ so that $x\sim y$ in $\c$. Let  $xy'=y'x$ for all $y'\in \tau _{d_2}$. Note that $\langle \tau _{d_2} \rangle$ is a subgroup of $G$. For $g\in G$ and $x'=x_1x_2\cdots x_k \in \langle \tau _{d_2} \rangle$, where $x_i\in \tau _{d_2}$, we have $g^{-1}x'g= g^{-1}x_1x_2\cdots x_k g=g^{-1}x_1gg^{-1}x_2g\cdots gg^{-1}x_kg$. Since $g^{-1}x_ig\in \tau _{d_2}$ so that $g^{-1}x'g\in \langle \tau _{d_2} \rangle$. Therefore, $\langle \tau _{d_2} \rangle$ is  normal subgroup of $G$. But $G$ is simple and $\langle \tau _{d_2} \rangle \neq \{e\}$ gives $G=\langle \tau _{d_2} \rangle$. Since $x$ commutes with every element of $\tau _{d_2}$. It follows that $x$ belongs to the centre $Z(G)$ of $G$. Consequently, we get $G=Z(G)$ and so $G$ is a simple group which is abelian. Therefore, $G$ must be cyclic group of prime order; a contradiction. Thus, the result holds.
 \end{proof}
 \begin{theorem}
 Let $G$ be a non-trivial finite simple group of order $n$. Then $\lambda (\a)=n$ if and only if $G$ is not a cyclic group of order $n\geq 3$.
 \end{theorem}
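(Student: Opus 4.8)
The plan is to prove the two implications separately, with the real work being the equality $\lambda(\a)=n$ for non-cyclic $G$, which I would extract from Theorem~\ref{lowerbound}.

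For the implication ``$\lambda(\a)=n \Rightarrow G$ is not cyclic of order $\geq 3$'' I would argue by contraposition. A cyclic simple group of order $n\geq 3$ is $\mathbb{Z}_p$ with $p=n\geq 3$, and then $\a=K_n$ by Theorem~\ref{complete}; since $\overline{K_n}$ consists of $n$ isolated vertices we have $c(\overline{K_n})=n\geq 3$, so Theorem~\ref{Lambdain path cover}(ii) (with $r=n$) gives $\lambda(\a)=2n-2\neq n$. Conversely, the only simple group that is cyclic of order $<3$ is $\mathbb{Z}_2$, for which $\a=K_2$ and $\lambda(\a)=2=n$; so for the remaining implication I may assume $G$ is a finite simple group that is not cyclic, equivalently non-abelian.

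By Theorem~\ref{lowerbound} it then suffices to exhibit a Hamiltonian path in $\overline{\a\setminus\{e\}}$, equivalently in the subgraph of $\c$ induced on $G\setminus\{e\}$. Write $\pi_G=\{d_1,d_2,\dots,d_k\}$, so $G\setminus\{e\}$ is the disjoint union $\tau_{d_1}\cup\cdots\cup\tau_{d_k}$. By Lemma~\ref{cd2} we have $C_{d_i}\geq 2$ for every $i$, and (as in the proof of Lemma~\ref{pi G}) the subgraph $\Gamma_i$ of $\c$ induced on $\tau_{d_i}$ is a complete $C_{d_i}$-partite graph with every part of size $\phi(d_i)$; in particular $|\tau_{d_i}|=C_{d_i}\phi(d_i)\geq 2$. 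I would first record the elementary fact that a balanced complete multipartite graph with at least two parts is Hamiltonian (or is $K_2$), and hence possesses a Hamiltonian path with one endpoint equal to any prescribed vertex. Then I build the desired path inductively along $d_1,\dots,d_k$: choose a Hamiltonian path $P_1$ of $\Gamma_1$ and fix an endpoint $w_1\in\tau_{d_1}$; given $P_i$ with endpoint $w_i\in\tau_{d_i}$, apply Lemma~\ref{x join y} with the pair of orders $o(w_i)=d_i$ and $d_{i+1}$ to the element $x=w_i$ to obtain $v_{i+1}\in\tau_{d_{i+1}}$ adjacent to $w_i$ in $\c$, and let $P_{i+1}$ be a Hamiltonian path of $\Gamma_{i+1}$ with endpoint $v_{i+1}$ and other endpoint $w_{i+1}$ (which is distinct from $v_{i+1}$ since $|\tau_{d_{i+1}}|\geq 2$). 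Because the sets $\tau_{d_1},\dots,\tau_{d_k}$ are pairwise disjoint, the concatenation $P_1,\,w_1v_2,\,P_2,\,w_2v_3,\,\dots,\,w_{k-1}v_k,\,P_k$ is a path in $\c$ meeting every element of $G\setminus\{e\}$ exactly once, that is, a Hamiltonian path of $\overline{\a\setminus\{e\}}$. Theorem~\ref{lowerbound} then yields $\lambda(\a)=n$.

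The main obstacle is modest and essentially bookkeeping: one must confirm that each piece $\Gamma_i$ is genuinely balanced complete multipartite with at least two parts (so that a Hamiltonian path with a prescribed endpoint exists), and verify that the edges furnished by Lemma~\ref{x join y} actually splice the vertex-disjoint pieces $P_1,\dots,P_k$ into a single path. Beyond that, every ingredient needed (Lemma~\ref{cd2} for $C_d\geq 2$, Lemma~\ref{pi G} for the multipartite structure of each $\Gamma_i$, Lemma~\ref{x join y} for the connecting edges, Theorem~\ref{complete} for the cyclic case, and Theorem~\ref{lowerbound}, equivalently Theorem~\ref{Lambdain path cover}, for the reduction) is already available, so no essential new difficulty is anticipated.
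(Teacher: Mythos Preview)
Your proposal is correct and follows essentially the same route as the paper: handle the cyclic case via Theorem~\ref{complete} and $\lambda(K_n)=2n-2$, and in the non-cyclic case decompose $G\setminus\{e\}$ into the sets $\tau_{d_i}$, use Lemma~\ref{pi G} (with Lemma~\ref{cd2}) for Hamiltonian paths inside each piece, and Lemma~\ref{x join y} to splice them, concluding via Theorem~\ref{lowerbound}/\ref{Lambdain path cover}. Your write-up is in fact more explicit than the paper's on the splicing step (prescribing an endpoint via the Hamiltonian-cycle/vertex-transitivity observation for balanced complete multipartite graphs), which is a welcome clarification of a point the paper leaves implicit.
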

 \begin{proof}
 If $G$ is cyclic, then by Theorem \ref{complete}, $\a$ is a complete graph. Consequently, $\lambda (\a)=2n-2$. Thus $\lambda (\a)=n$ if and only if $n=2$. We may now suppose that $G$ is a non-cyclic group. To prove our result, it is sufficient to show that the graph $\overline{\a \setminus \{e\}}$ has a Hamiltonian cycle (see Theorem \ref{Lambdain path cover}). Let $\pi _G = \{d_1, d_2, \ldots , d_k\}$. Then $G\setminus \{e\}=\bigcup \limits_{i=1}^{k} \tau _{d_i}$. By Lemma \ref{pi G}, for each $i\in [k]=\{1,2,\ldots, k\}$, we have a Hamiltonian path in the subgraph induced by the set $\tau _{d_i}$ in $\c$ and by Lemma \ref{x join y}, we get a Hamiltonian path in $\overline{\a \setminus \{e\}}$. Thus, the result holds.
 \end{proof}
 
 In the remaining part of the paper, we obtain the lambda number of enhanced power graphs of finite nilpotent groups. 
 
 \begin{lemma}{\label{cd3}}
  Let $G'$ be a non-trivial nilpotent group of odd order having no Sylow subgroup which is cyclic. If $G\cong G'\times \mathbb{Z}_n$, where $n\geq 1$ and $\mathrm{gcd}(n,|G'|)=1$, then  $C_{o(x)}\geq 3$ for each $x\in G\setminus \mathrm{Dom}(\a)$.
 \end{lemma}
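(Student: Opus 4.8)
The plan is first to pin down which elements need to be considered. Since $G\cong G'\times\mathbb{Z}_n$ with $\gcd(|G'|,n)=1$, Theorem \ref{Dominating nilpotent} gives $\mathrm{Dom}(\a)=\{(e',x):x\in\mathbb{Z}_n\}$, so any $x\in G\setminus\mathrm{Dom}(\a)$ has the form $x=(a,b)$ with $a\neq e'$ in $G'$ and $b\in\mathbb{Z}_n$. As $o(a)\mid|G'|$ and $o(b)\mid n$ are coprime, $o(x)=o(a)\,o(b)$ with $o(a)>1$, and $\langle x\rangle=\langle a\rangle\times\langle b\rangle$. Let $B$ be the unique subgroup of $\mathbb{Z}_n$ of order $o(b)$. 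Then for every cyclic subgroup $A\leq G'$ of order $o(a)$, the subgroup $A\times B\leq G$ is cyclic of order $o(x)$ by coprimality, and distinct choices of $A$ give distinct subgroups $A\times B$. Hence $C_{o(x)}$ is at least the number of cyclic subgroups of $G'$ of order $o(a)$ (in fact equality holds, by splitting an arbitrary cyclic subgroup of order $o(x)$ along its coprime parts, but the inequality is all we need). So it remains to prove that $G'$ has at least three cyclic subgroups of order $o(a)$.

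Next I would invoke the nilpotency of $G'$. Write $G'=P_1\times\cdots\times P_r$ as the direct product of its Sylow subgroups (Theorem \ref{nilpotent}), where each $P_i$ is a $p_i$-group with $p_i$ odd (as $|G'|$ is odd) and, by hypothesis, non-cyclic. Because the orders $|P_i|$ are pairwise coprime, every subgroup of $G'$ splits as the product of its intersections with the $P_i$; in particular a cyclic subgroup of $G'$ of order $o(a)=p_1^{\beta_1}\cdots p_r^{\beta_r}$ is precisely a product $Q_1\times\cdots\times Q_r$ with $Q_i$ a cyclic subgroup of $P_i$ of order $p_i^{\beta_i}$. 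Writing $N_i$ for the number of cyclic subgroups of $P_i$ of order $p_i^{\beta_i}$, the number of cyclic subgroups of $G'$ of order $o(a)$ equals $\prod_{i=1}^{r}N_i$. Since $a\neq e'$, some coordinate $a_j$ of $a$ is nontrivial, so $\beta_j\geq1$; moreover $N_i\geq1$ for every $i$ (if $\beta_i\geq1$ this is witnessed by $\langle a_i\rangle$, and if $\beta_i=0$ then $N_i=1$), so $\prod_{i=1}^{r}N_i\geq N_j$.

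Finally I would bound $N_j$ via Theorem \ref{pgroupclass}, which applies because $P_j$ is non-cyclic and $p_j$ is odd. If $\beta_j=1$, then $N_j=C_{p_j}(P_j)\equiv 1+p_j\pmod{p_j^{2}}$, so $N_j\geq p_j+1\geq4$. If $\beta_j\geq2$, then since $p_j^{\beta_j}$ divides the exponent of $P_j$ we get $p_j\mid N_j$, and as $N_j\geq1$ this forces $N_j\geq p_j\geq3$. In either case $N_j\geq3$, and combined with the previous step this yields $C_{o(x)}\geq3$, as required. The only part calling for care is the bookkeeping that reduces $C_{o(x)}$ in $G$ to a product of class numbers over the Sylow subgroups of $G'$; this hinges entirely on the coprimality hypotheses $\gcd(n,|G'|)=1$ and the pairwise coprimality of the $|P_i|$, which make all the subgroups involved split as direct products along the primes, so I do not anticipate a genuine obstacle there.
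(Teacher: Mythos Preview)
Your proof is correct and follows essentially the same approach as the paper: identify $\mathrm{Dom}(\a)$ via Theorem~\ref{Dominating nilpotent}, pick a Sylow coordinate $P_j$ where the $G'$-part of $x$ is nontrivial, and use Theorem~\ref{pgroupclass} (for a non-cyclic $p$-group with $p$ odd) to produce at least three cyclic subgroups of $P_j$ of the required order. The only cosmetic difference is that the paper exhibits three concrete elements $x,y,z$ by altering just the $j$-th coordinate and then checks directly that $\langle x\rangle,\langle y\rangle,\langle z\rangle$ are pairwise distinct, whereas you package the same content as the counting identity $C_{o(x)}=\prod_i N_i\geq N_j\geq 3$.
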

 \begin{proof}
 Let $x=(x', y')$ be an arbitrary element of $G\setminus \mathrm{Dom}(\a)$. Since $G'$ is a nilpotent group, and so $G'=P_1\times P_2 \times \cdots \times P_r$, where $P_i^{\prime}s$ are Sylow subgroups of $G'$. Consequently, $x=(x_1, x_2,\ldots , x_r, y')$, where $x_i\in P_i$ for each $i\in [r]$. It follows that $x_j \neq e$ for some $j\in [r]$ because $x\notin \mathrm{Dom}(\a)$. Consider $y=(x_1, x_2,\ldots ,x_{j-1},y_j,x_{j+1}, \ldots , x_r, y')$, $z=(x_1, x_2,\ldots ,x_{j-1},z_j,x_{j+1}, \ldots , x_r, y')$, where $y_j, z_j\in P_j$ such that $o(x_i)=o(y_j)=o(z_j)$ and the cyclic subgroups $\langle x_j \rangle , \langle y_j \rangle , \langle z_j \rangle$ of $P_j$ are distinct [cf. Theorem \ref{pgroupclass}]. Clearly, $o(x)=o(y)=o(z)$. Note that the cyclic subgroups $\langle x \rangle , \langle y \rangle , \langle z \rangle$ of $G$ are distinct. Without loss of generality, let if possible, $\langle x \rangle = \langle y \rangle$. Then there exists $m\in \mathbb{N}$ such that $x^m=y$. Now consider $l=o(x_1)o(x_2)\cdots o(x_{j-1})o(x_{j+1})\cdots o(x_r)o(y')$. Then $x^{ml}=y^l$ and it follows that $x_j^{ml}=y_j^l$. Since $\mathrm{gcd}(o(y_j),l)=1$, we obtain $o(y_j)=o(y_j^l)$. Consequently, $\langle y_j\rangle =\langle y_j^l \rangle = \langle x_j ^{ml} \rangle \subseteq \langle x_j \rangle$. Thus, $\langle y_j\rangle = \langle x_j\rangle$; a contradiction. Thus, the result holds.
 \end{proof}
 
 \begin{lemma}{\label{hamiltonian path in tau d}}
 Let $G'$ be a non-trivial nilpotent group of odd order having no Sylow subgroup which is cyclic. If $G\cong G'\times \mathbb{Z}_n$, where $n\geq 1$ and $\mathrm{gcd}(n,|G'|)=1$, then for each $d\in D$, there exists a Hamiltonian path in the subgraph of $\c$ induced by the set $\tau _d$, where $D=\{o(x) : x\in G\setminus \mathrm{Dom}(\a)\}$.
 \end{lemma}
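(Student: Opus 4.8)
The plan is to recognise the subgraph of $\c$ induced by $\tau_d$ as a balanced complete multipartite graph, and then to write down a Hamiltonian path in it explicitly. Fix $d\in D$ and choose $x\in G\setminus\mathrm{Dom}(\a)$ with $o(x)=d$; then Lemma \ref{cd3} gives $C_d\ge 3$. Let $\Gamma_d$ denote the subgraph of $\c$ induced by $\tau_d$, and first determine its edges. For distinct $u,v\in\tau_d$, the vertices $u$ and $v$ are \emph{non}-adjacent in $\c$ if and only if they are adjacent in $\a$, i.e.\ $u,v\in\langle z\rangle$ for some $z\in G$; since $\langle z\rangle$ is cyclic, it has a unique subgroup of order $d$, and this forces $\langle u\rangle=\langle v\rangle$. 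Conversely $\langle u\rangle=\langle v\rangle$ trivially yields $u,v\in\langle u\rangle$, so $u\sim v$ in $\a$. Hence non-adjacency in $\c$, restricted to $\tau_d$, is exactly the relation $\rho$, an equivalence relation whose classes among elements of order $d$ are the $C_d$ cyclic classes $\rho_1,\dots,\rho_{C_d}$, each of size $\phi(d)$. Therefore $\Gamma_d$ is the complete multipartite graph with $C_d$ parts, each of cardinality $\phi(d)$. (In particular $\tau_d$ contains no dominating vertex of $\a$: such a vertex would be adjacent in $\a$ to every other element of $\tau_d$, which by the previous line would force all of $\tau_d$ into one $\rho$-class, i.e.\ $C_d=1$, contradicting $C_d\ge 3$.)

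It then remains to produce a Hamiltonian path in $\Gamma_d$. Writing $\rho_i=\{a_{i,1},a_{i,2},\dots,a_{i,\phi(d)}\}$ for $1\le i\le C_d$, I claim that the sequence
\[
a_{1,1},a_{2,1},\dots,a_{C_d,1},\ a_{1,2},\dots,a_{C_d,2},\ \dots,\ a_{1,\phi(d)},\dots,a_{C_d,\phi(d)}
\]
is such a path: it lists every element of $\tau_d$ exactly once, and any two consecutive terms lie in different parts. Within a block this is clear, since $a_{i,j}$ and $a_{i+1,j}$ belong to distinct classes; at a junction $a_{C_d,j},a_{1,j+1}$ it holds because $C_d\ge 3>1$. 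Since in a complete multipartite graph any two vertices from distinct parts are joined, consecutive terms are adjacent in $\Gamma_d$, so the displayed sequence is a Hamiltonian path, which is what was wanted.

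The only step requiring any care is the adjacency analysis of $\tau_d$ inside $\c$, and that rests only on the uniqueness of the order-$d$ subgroup of a cyclic group; once $C_d\ge 3$ is available from Lemma \ref{cd3}, the complete multipartite graph is comfortably balanced and the Hamiltonian path is immediate, so no genuine obstacle arises. Alternatively, one could finish by invoking the classical criterion that $K_{n_1,\dots,n_s}$ with $s\ge 2$ admits a Hamiltonian path precisely when $\max_i n_i\le 1+\sum_{j\ne i}n_j$, which plainly holds here since all parts are equal and $s=C_d\ge 2$.
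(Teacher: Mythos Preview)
Your proof is correct and follows essentially the same approach as the paper: invoke Lemma~\ref{cd3} to obtain $C_d\ge 3$, identify the induced subgraph on $\tau_d$ in $\c$ as a balanced complete $C_d$-partite graph with parts of size $\phi(d)$, and deduce a Hamiltonian path. Your version is in fact more complete, since you justify the multipartite structure via the unique-subgroup argument and write down an explicit Hamiltonian path, whereas the paper simply asserts both facts (the adjacency analysis having appeared earlier in the proof of Lemma~\ref{pi G}); the paper's one-line proof also records the slightly stronger fact that a Hamiltonian path exists between \emph{any} two vertices of $\tau_d$, which is used later in Theorem~\ref{lambda number theorem odd} when choosing endpoints.
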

 
 \begin{proof}
 Let $d\in D$. Then by Lemma \ref{cd3}, $C_d=s$, where $s\geq 3$. Notice that the subgraph induced by the set $\tau _d$ in $\c$ is a complete $s$-partite graph with exactly $\phi(d)$ vertices in each partition set. Thus, we get a Hamiltonian path between any two elements of $\tau _d$.
 \end{proof}

 \begin{theorem}{\label{lambda number theorem odd}}
  Let $G'$ be a non-trivial nilpotent group of odd order having no Sylow subgroup which is cyclic. If $G\cong G'\times \mathbb{Z}_n$, where $n\geq 1$ and $\mathrm{gcd}(n,|G'|)=1$, then $\lambda(\a)= |G|+ |\mathrm{Dom}(\a)|-1$.
 \end{theorem}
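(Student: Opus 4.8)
The plan is to compute, via Theorem~\ref{Lambdain path cover}, the path covering number of $\c$. By Theorem~\ref{Dominating nilpotent}, $\mathrm{Dom}(\a)=\{(e',x):x\in\mathbb{Z}_n\}$, and a vertex is dominating in $\a$ exactly when it is isolated in $\c$. Hence $\c$ is the disjoint union of $|\mathrm{Dom}(\a)|$ isolated vertices and the graph $\overline{\a\setminus\mathrm{Dom}(\a)}$ (the complement of the subgraph of $\a$ induced on $G\setminus\mathrm{Dom}(\a)$), so $c(\c)=|\mathrm{Dom}(\a)|+c\bigl(\overline{\a\setminus\mathrm{Dom}(\a)}\bigr)$. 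Since $|G'|>1$ we have $G\setminus\mathrm{Dom}(\a)\ne\varnothing$, so $c(\c)\ge|\mathrm{Dom}(\a)|+1\ge2$, and Theorem~\ref{Lambdain path cover}(ii) yields $\lambda(\a)=|G|+c(\c)-2\ge |G|+|\mathrm{Dom}(\a)|-1$, with equality if and only if $\overline{\a\setminus\mathrm{Dom}(\a)}$ has a Hamiltonian path. So the entire task is to produce such a Hamiltonian path.

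Write $G'=P_1\times\cdots\times P_r$ with $P_i$ the Sylow $p_i$-subgroup (non-cyclic, of odd order), and set $\mathrm{supp}(d)=\{i:p_i\mid d\}$. Because $\gcd(|G'|,n)=1$, two distinct non-dominating elements are adjacent in $\c$ precisely when their $G'$-parts disagree in some coordinate $i$ and the two $i$-th components generate a non-cyclic subgroup of $P_i$. Partition $G\setminus\mathrm{Dom}(\a)=\bigsqcup_{d\in D}\tau_d$ with $D=\{o(z):z\in G\setminus\mathrm{Dom}(\a)\}$; by Lemma~\ref{hamiltonian path in tau d} (using Lemma~\ref{cd3}), for each $d\in D$ the subgraph of $\c$ induced on $\tau_d$ is complete $C_d$-partite with $C_d\ge3$ equal parts, hence Hamiltonian-connected and of order at least $3$. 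I would then prove two ``joining'' facts: (a) for $d,d'\in D$, $\c$ has an edge between $\tau_d$ and $\tau_{d'}$ iff $\mathrm{supp}(d)\cap\mathrm{supp}(d')\ne\varnothing$; and (b) when $\mathrm{supp}(d)\cap\mathrm{supp}(d')\ne\varnothing$, at least two vertices of $\tau_d$ have a $\c$-neighbour in $\tau_{d'}$. Fact (a) is a short direct-product computation: disjoint supports force $\langle g_i,h_i\rangle$ cyclic in every coordinate (one of the two components is trivial there), hence $\langle g,h\rangle$ cyclic; conversely, if $i\in\mathrm{supp}(d)\cap\mathrm{supp}(d')$ one picks $i$-th components in $P_i$ generating a non-cyclic subgroup and completes them to elements of $\tau_d$ and of $\tau_{d'}$. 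Fact (b) refines this by a counting argument over the cyclic subgroups of $P_i$, using $C_{p_i^{j}}\ge 3$ (Lemma~\ref{cd3}); this is where the odd-order hypothesis enters, and one should note that the ``strong'' analogue of Lemma~\ref{x join y} --- every vertex of $\tau_d$ having a partner in $\tau_{d'}$ --- is false here (for instance, in $\mathbb{Z}_p\times\mathbb{Z}_{p^2}$ an element of order $p$ in the $\mathbb{Z}_{p^2}$-direction is $\c$-adjacent to no element of order $p^2$), so the weaker fact (b) is the right replacement.

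With (a) and (b) I would order the blocks. Since each $P_i$ is non-trivial, every non-empty $S\subseteq[r]$ is $\mathrm{supp}(d)$ for some $d\in D$, and elements of $D$ with a common support are mutually adjacent in the graph $\mathcal{R}$ on $D$ defined by $d\sim d'\iff\mathrm{supp}(d)\cap\mathrm{supp}(d')\ne\varnothing$; thus $\mathcal{R}$ is obtained from the ``intersecting-subsets'' graph $\mathcal{K}_r$ on the non-empty subsets of $[r]$ by replacing each vertex with a clique. A routine induction on $r$ shows $\mathcal{K}_r$ has a Hamiltonian path: given one for $\mathcal{K}_r$, follow it by $[r+1]$ --- adjacent to every non-empty subset of $[r]$ --- and then by the remaining subsets of $[r+1]$ containing $r+1$ in any order (they form a clique), obtaining a Hamiltonian path of $\mathcal{K}_{r+1}$. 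This lifts to a Hamiltonian path of $\mathcal{R}$, i.e.\ an enumeration $d_1,\dots,d_m$ of $D$ with $\mathrm{supp}(d_l)\cap\mathrm{supp}(d_{l+1})\ne\varnothing$ for all $l$. Finally, build the Hamiltonian path of $\overline{\a\setminus\mathrm{Dom}(\a)}$ greedily from left to right: having entered $\tau_{d_l}$ at a prescribed vertex $v_l$ ($l\ge2$), use (b) to choose an exit vertex $u_l\ne v_l$ of $\tau_{d_l}$ that has a $\c$-neighbour $v_{l+1}\in\tau_{d_{l+1}}$, run a Hamiltonian path of $\tau_{d_l}$ from $v_l$ to $u_l$, and cross the edge $u_lv_{l+1}$; concatenating the block-paths along these crossing edges gives the desired Hamiltonian path of $\overline{\a\setminus\mathrm{Dom}(\a)}$.

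The main obstacle is establishing the joining facts (a) and, especially, the robust form (b). In the finite simple case (Lemma~\ref{x join y}) adjacency in $\c$ is easy to arrange because non-commuting partners abound; here nilpotency forces many pairs of elements to generate cyclic subgroups, so one must instead analyse, coordinate by coordinate, how the cyclic subgroups of the $P_i$ intersect, which is exactly what the odd-prime class-number estimates behind Lemma~\ref{cd3} (that is, Theorem~\ref{pgroupclass}) control. Once (a) and (b) are in place, the Hamiltonicity of $\mathcal{K}_r$ and the greedy threading of the blocks are elementary.
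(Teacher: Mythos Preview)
Your argument is correct, but the ordering of the blocks $\tau_d$ and the joining mechanism differ from the paper's. The paper does not use supports at all: instead it proves (as its ``Claim'') that the set $D$ can be listed as $\beta_1,\ldots,\beta_t$ so that for every $i$ one of $\beta_i\mid\beta_{i+1}$ or $\beta_{i+1}\mid\beta_i$ holds, by repeatedly inserting $\mathrm{lcm}(d_l,d_{l+1})$ (which lies in $D$ by Lemma~\ref{lcm}) between any first ``bad'' pair. The joining step is then immediate from divisibility: if $\beta_i\mid\beta_{i+1}$ and $y\in\tau_{\beta_{i+1}}$, any $x'\in\tau_{\beta_i}$ with $\langle x'\rangle$ different from the unique order-$\beta_i$ subgroup of $\langle y\rangle$ is non-adjacent to $y$ in $\a$, so one can always choose the end vertex of $H_i$ to meet the initial vertex of $H_{i+1}$ in $\c$. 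By contrast, you order the blocks via a Hamiltonian path in the intersecting-subsets graph $\mathcal{K}_r$ (blown up to $\mathcal{R}$) and join consecutive blocks by your coordinatewise fact~(b). Your route is a little more structural and makes the role of the non-cyclic Sylow subgroups transparent through the supports; the paper's route avoids the auxiliary graph $\mathcal{K}_r$ and the separate proof of~(b), trading them for the short LCM insertion argument and a one-line divisibility check. Both hinge on exactly the same input, the class-number bounds of Theorem~\ref{pgroupclass} feeding into Lemma~\ref{cd3}, and both exploit that each $\tau_d$ is a balanced complete multipartite graph with at least three parts (hence Hamiltonian-connected), so the two proofs are of comparable length and difficulty.
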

 
 \begin{proof}
 In view of Theorem \ref{Lambdain path cover}, to prove our result it is sufficient to show that $c(\c)=|\mathrm{Dom}(\a)|+1$. For $x\in \mathrm{Dom}(\a)$, clearly $x$ is an isolated vertex in $\c$. Thus, it is sufficient to show that the subgraph of $\c$ induced by the non-dominating vertices of $\a$ has a Hamiltonian path. Let $G\setminus  \mathrm{Dom}(\a)$ has elements of order $d_1, d_2, \ldots , d_t$. Consider $S=\{d_1,d_2,\ldots ,d_t\}$ , where $d_1 < d_2< \cdots <d_t$.\\
{\rm  \textbf{Claim:}} There exists an ordered set $S'=\{\beta _1, \beta _2,\ldots , \beta _t\}$, where $\beta _i \in S$, such that either $\beta _i \vert \beta _{i+1}$ or $\beta _{i+1} \vert \beta _{i}$ for each $i\in [t-1]$.\\
\textit{Proof of claim:} If for each $i\in [t-1]$, either $d _i \vert d _{i+1}$ or $d _{i+1} \vert d _{i}$ then $S=S'$. Otherwise, choose the smallest $l$ such that neither $d_l\vert  d_{l+1}$ nor $d_{l+1}\vert  d_l$. By Lemma \ref{lcm}, $d_{l+j}=\mathrm{lcm}(d_l, d_{l+1})\in \pi_G$ for some $j\geq 2$. Now let $x=(x_1,x_2)\in G\setminus \mathrm{Dom}(\a)$ such that $o(x)=d_l$. Clearly, $o(x_1)>1$ and  $o(x)=o(x_1)o(x_2)$. Suppose $z=(z_1,z_2)\in G$ such that $o(z)=d_{l+j}$. Since $d_l\vert d_{l+j}$, it follows that $o(x_1)\vert o(z_1)$. Consequently, $o(z_1)>1$ and so $d_{l+j}\in S$ (cf. Theorem \ref{Dominating nilpotent}). Thus, by taking $i$-th element of the ordered set $\{d_1, d_2, \ldots , d_l, d_{l+j}, d_{l+1}, \ldots ,  d_{l+j-1}, d_{l+j+1}, \ldots ,d_t\}$ as $\gamma _i$, we get an ordered set $S_1=\{\gamma _1 ,\gamma _2, \ldots , \gamma _l, \gamma _{l+1}, \gamma _{l+2}, \ldots, \gamma _t\}$ such that either $\gamma _i \vert \gamma _{i+1}$ or $\gamma _{i+1} \vert \gamma _{i}$ for each $i\in \{1,2,\ldots, l+1\}$. If for each $i\in \{l+2,l+3,\ldots, t-1\}$, either $\gamma _i \vert \gamma _{i+1}$ or $\gamma _{i+1} \vert \gamma _{i}$, then $S_1=S'$. Otherwise, choose the smallest $l'\in \{l+2,l+3,\ldots, t-1\}$ and repeat the above process. On continuing this process, we get desired ordered set $S'$.

  
Now by Lemma \ref{hamiltonian path in tau d}, for each $i\in [t]$, the  subgraph of $\c$ induced by the set $\tau _{\beta _i}$ is a complete $s$-partite graph with $\phi(\beta _i)$ vertices in  each partition set. Observe that $G \setminus \mathrm{Dom}(\a) = \bigcup \limits_{i=1}^t \tau _{\beta _i}$. Then there exist paths $H_1, H_2,\ldots, H_t $ which covers all the vertices of $\tau_{\beta _1},\tau_{\beta _2} ,\ldots, \tau_{\beta _t}$, respectively. Now we shall show that for each $i\in [t-1]$, the end vertex of $H_i$ is adjacent to the initial vertex of $H_{i+1}$ in $\c$ through the following two cases:
  
   \noindent\textbf{Case-1: } $\beta _i\vert \beta _{i+1}$. Let $x\in \tau_{\beta _i}$ and $y$ be the initial vertex of $H_{i+1}$. If $x\nsim y$ in $\a$, then we choose $x$ to be the end vertex of $H_i$ so that $x\sim y \in \c$. Now we may assume that $x\sim y$ in $\a$. Then there exists $z\in G$ such that $x,y \in \langle z \rangle$. Since $\beta _i\vert \beta _{i+1}$, we get $\langle x\rangle \subset \langle y\rangle$. Let $x'\in \tau _{\beta _i}$ such that $\langle x\rangle \neq \langle x'\rangle$. Now if $x'\sim y$ in $\a$ then $\langle x'\rangle \subset \langle y\rangle$, which is not possible as $\langle x\rangle \neq \langle x'\rangle$. Thus $x'\sim y$ in $\c$. Therefore, we can choose $x'$ as the end vertex of $H_i$. 
   
    \noindent\textbf{Case-2: } $\beta _{i+1}\vert \beta _{i}$. Let $x$ be the end vertex of $H_i$ and $y\in \tau_{\beta _{i+1}}$. If $x\nsim y$ in $\a$, then we choose $y$ to be the initial vertex of $H_{i+1}$ so that $x\sim y \in \c$. Otherwise, there exists $z\in G$ such that $x,y \in \langle z \rangle$. Since $\beta _{i+1}\vert \beta _i$, it follows that $\langle y\rangle \subset \langle x\rangle$. Let $y'\in \tau _{\beta _{i+1}}$ such that $\langle y\rangle \neq \langle y'\rangle$. Now if $x\sim y'$ in $\a$ then $\langle y'\rangle \subset \langle x\rangle$, which is not possible as $\langle y\rangle \neq \langle y'\rangle$. Thus $x\sim y'$ in $\c$. Therefore, consider $y'$ as the initial vertex of $H_{i+1}$. 
   
  Hence, we get a Hamiltonian path in subgraph of $\c$ induced by the set $G \setminus \mathrm{Dom}(\a)$.
 \end{proof}
 
 \begin{lemma}{\label{cd3 even}}
  Let $G'$ be a nilpotent group of odd order having no Sylow subgroup which is cyclic. If $G\cong G'\times P \times \mathbb{Z}_n$, where $P$ is a non-cyclic $2$-group and $\mathrm{gcd}(n,|G'|)= \mathrm{gcd}(2,n)=1$,  then for each $x \in \mathcal{S}' = \{(g_1,g_2,g_3)\in G \; \mid \;  g_1\neq e_{G'}\}$, we have $C_{o(x)}\geq 3$.
 \end{lemma}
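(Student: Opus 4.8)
The plan is to mimic the proof of Lemma \ref{cd3}, carrying the new factor $P$ along as a passive (inert) coordinate. First I would decompose $G' = P_1 \times P_2 \times \cdots \times P_r$ into its Sylow subgroups $P_i$; by hypothesis each $p_i$ is odd and each $P_i$ is non-cyclic, so Theorem \ref{pgroupclass} applies to every $P_i$. Given $x = (g_1, g_2, g_3) \in \mathcal{S}'$, write $g_1 = (x_1, x_2, \ldots, x_r)$ with $x_i \in P_i$. Since $g_1 \neq e_{G'}$, some coordinate $x_j$ is non-trivial, say $o(x_j) = p_j^a$ with $a \ge 1$. The key input is that Theorem \ref{pgroupclass} forces $C_{p_j^a} \ge 3$ inside $P_j$: if $a = 1$ then $C_{p_j} \equiv 1 + p_j \pmod{p_j^2}$, so $C_{p_j} \ge 1 + p_j \ge 4$; if $a \ge 2$ then $p_j \mid C_{p_j^a}$ and $C_{p_j^a} \ge 1$ (as $x_j$ witnesses an element of order $p_j^a$), so $C_{p_j^a} \ge p_j \ge 3$. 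Hence I may choose $y_j, z_j \in P_j$ of order $o(x_j)$ such that $\langle x_j \rangle$, $\langle y_j \rangle$, $\langle z_j \rangle$ are pairwise distinct.

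Next I would lift these to $G$: let $y$ (resp. $z$) be obtained from $x$ by replacing the $j$-th coordinate $x_j$ of $g_1$ by $y_j$ (resp. $z_j$), leaving $g_2$, $g_3$ and all other coordinates untouched. Then $o(x) = o(y) = o(z)$, and it remains to show $\langle x \rangle$, $\langle y \rangle$, $\langle z \rangle$ are pairwise distinct, which yields $C_{o(x)} \ge 3$. I would argue by contradiction exactly as in Lemma \ref{cd3}: suppose, say, $\langle x \rangle = \langle y \rangle$, so $y = x^m$ for some $m \in \mathbb{N}$. Put $l = o(g_2)\, o(g_3) \prod_{i \ne j} o(x_i)$. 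Then $x^{ml} = y^l$, and projecting onto the $j$-th coordinate of $G'$ gives $x_j^{ml} = y_j^l$; since $\gcd(l, o(y_j)) = 1$ (see below) we get $\langle y_j \rangle = \langle y_j^l \rangle = \langle x_j^{ml} \rangle \subseteq \langle x_j \rangle$, forcing $\langle y_j \rangle = \langle x_j \rangle$ by equality of orders, a contradiction. The cases $\langle x \rangle = \langle z \rangle$ and $\langle y \rangle = \langle z \rangle$ are handled identically.

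The only step requiring attention is the verification that $\gcd(l, o(x_j)) = 1$, i.e.\ $p_j \nmid l$: here $o(g_2)$ is a power of $2$, $o(g_3)$ divides $n$, and each $o(x_i)$ with $i \ne j$ is a power of a prime $p_i \ne p_j$, so the hypotheses $\gcd(n,|G'|) = \gcd(2,n) = 1$ together with the oddness of $|G'|$ guarantee $p_j \nmid l$, hence $\gcd(l, o(x_j)) = \gcd(l, o(y_j)) = 1$. No genuine obstacle is expected; the content is essentially a transcription of the proof of Lemma \ref{cd3}, the single new feature being that the $P$-coordinate (and, as there, the $\mathbb{Z}_n$-coordinate) must be folded into the auxiliary exponent $l$, and its coprimality with $p_j$ is exactly what the product hypotheses on $G$ provide.
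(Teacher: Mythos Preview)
Your proposal is correct and follows exactly the approach the paper intends: the paper's own proof merely states that after replacing $G\setminus \mathrm{Dom}(\a)$ by $\mathcal{S}'$, the argument is identical to that of Lemma~\ref{cd3}, and omits the details. Your write-up supplies precisely those details, with the only new ingredient being the inclusion of $o(g_2)$ in the exponent $l$ and the observation (using the oddness of $|G'|$) that $p_j\nmid o(g_2)$, which is the one point where the extra factor $P$ enters.
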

 
 \begin{proof}
After taking $\mathcal{S}'$ in place of $G\setminus \mathrm{Dom}(\a)$, the proof is similar to the proof of Lemma \ref{cd3}. Hence, we omit the details.
 \end{proof}
 
 \begin{theorem}{\label{Hamiltonian path in S'}}
    Let $G'$ be a nilpotent group of odd order having no Sylow subgroup which is cyclic. If $G\cong G'\times P \times \mathbb{Z}_n$, where $P$ is a non-cyclic $2$-group and $\mathrm{gcd}(n,|G'|)= \mathrm{gcd}(2,n)=1$, then there exists a Hamiltonian path in the subgraph of $\c$ induced by the set $\mathcal{S}'$.
 \end{theorem}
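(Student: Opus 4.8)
The plan is to adapt the proof of Theorem~\ref{lambda number theorem odd}, using $\mathcal{S}'$ where that argument used $G\setminus\mathrm{Dom}(\a)$ and Lemma~\ref{cd3 even} where it used Lemma~\ref{cd3}. Two structural observations about $\mathcal{S}'$ make this transfer legitimate. Since $|G'|$ is odd, $P$ is a $2$-group, and $\mathrm{gcd}(n,|G'|)=\mathrm{gcd}(2,n)=1$, the numbers $|G'|,|P|,n$ are pairwise coprime; hence each $d\in\pi_G$ factors uniquely as $d=d_0d_1d_2$ with $d_0\mid|G'|$, $d_1\mid|P|$, $d_2\mid n$ (call $d_0$ the $G'$-part of $d$), and $(g_1,g_2,g_3)\in G$ has order $d$ precisely when $o(g_1)=d_0$, $o(g_2)=d_1$, $o(g_3)=d_2$. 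Thus whether an element of order $d$ lies in $\mathcal{S}'$ --- that is, has $g_1\neq e_{G'}$, equivalently has $d_0>1$ --- depends only on $d$, so putting $D=\{o(x):x\in\mathcal{S}'\}$ gives $\mathcal{S}'=\bigcup_{d\in D}\tau_d$. Moreover $D$ is closed under taking least common multiples realised in $G$: if $d,d'\in D$ then Lemma~\ref{lcm} provides $z\in G$ with $o(z)=\mathrm{lcm}(d,d')$, and the $G'$-part of $o(z)$ is a multiple of the $G'$-part of $d$, which is $>1$; hence $z\in\mathcal{S}'$ and $\mathrm{lcm}(d,d')\in D$.

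With these facts in hand I would run the argument of Theorem~\ref{lambda number theorem odd} almost verbatim. First, using only the lcm-closure of $D$, relabel $D=\{\beta_1,\dots,\beta_t\}$ so that for every $i$ one of $\beta_i\mid\beta_{i+1}$, $\beta_{i+1}\mid\beta_i$ holds; the reordering step of Theorem~\ref{lambda number theorem odd} uses nothing about the index set beyond this property. Next, by Lemma~\ref{cd3 even} we have $C_{\beta_i}\geq 3$ for every $i$, and for any $d$ the subgraph of $\c$ induced by $\tau_d$ is the complete multipartite graph with $C_d$ parts of size $\phi(d)$, since two elements of equal order are non-adjacent in $\a$ exactly when they generate distinct cyclic subgroups. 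A balanced complete multipartite graph with at least three parts is Hamiltonian-connected, so each $\tau_{\beta_i}$ carries a Hamiltonian path between any prescribed pair of its (at least three) vertices. Finally I would glue consecutive paths as in the two cases of Theorem~\ref{lambda number theorem odd}: if, say, $\beta_i\mid\beta_{i+1}$, then an element $y\in\tau_{\beta_{i+1}}$ is adjacent in $\a$, among the vertices of $\tau_{\beta_i}$, only to the $\phi(\beta_i)$ generators of the unique cyclic subgroup of order $\beta_i$ contained in $\langle y\rangle$, so at least $C_{\beta_i}-1\geq 2$ of the $\rho$-classes of $\tau_{\beta_i}$ consist of vertices adjacent to $y$ in $\c$; the symmetric statement holds when $\beta_{i+1}\mid\beta_i$. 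Since every $\tau_{\beta_i}$ has at least three vertices, this leaves enough freedom to choose the endpoints of the Hamiltonian paths on the $\tau_{\beta_i}$ so that the terminal vertex of each is adjacent in $\c$ to the initial vertex of the next; concatenating them produces the required Hamiltonian path on $\mathcal{S}'=\bigcup_i\tau_{\beta_i}$.

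I do not expect a serious obstacle here: the theorem is essentially a variant of Theorem~\ref{lambda number theorem odd}. The only point that genuinely needs care --- and the only place where ``$|G'|$ odd'' and ``$\mathrm{gcd}(2,n)=\mathrm{gcd}(n,|G'|)=1$'' are really used --- is the pair of structural facts in the first paragraph, namely that $\mathcal{S}'$ is a union of classes $\tau_d$ and that the resulting index set $D$ is closed under lcm; these are exactly what the reordering step requires. Once they are in place, everything else is bookkeeping identical to Theorem~\ref{lambda number theorem odd}, so I expect the write-up to be short and to refer back to that proof. The one subtlety to keep in mind in the gluing is that each intermediate Hamiltonian path must be joined, simultaneously, to its predecessor at one end and to its successor at the other; processing the blocks $\tau_{\beta_1},\dots,\tau_{\beta_t}$ from left to right and invoking $C_{\beta_i}\geq 3$ to guarantee at least two admissible endpoint choices at each link disposes of this without difficulty.
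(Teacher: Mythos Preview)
Your proposal is correct and follows exactly the paper's approach: the paper's own proof is a single sentence stating that the result holds by Lemma~\ref{cd3 even} together with the argument of Theorem~\ref{lambda number theorem odd}. Your explicit verification that $\mathcal{S}'$ is a union of classes $\tau_d$ and that the resulting index set $D$ is closed under least common multiples is precisely what the adaptation requires, and is in fact more detailed than what the paper provides.
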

 
 \begin{proof}
  Using Lemma \ref{cd3 even}, and by the similar argument used in the proof of Theorem \ref{lambda number theorem odd}, the result holds.
 \end{proof}
 
 \begin{theorem}{\label{lambda number main}}
   Let $G'$ be a nilpotent group of odd order having no Sylow subgroup which is cyclic. If $G\cong G'\times P \times \mathbb{Z}_n$, where $P$ is a non-cyclic $2$-group and $\mathrm{gcd}(n,|G'|)= \mathrm{gcd}(2,n)=1$,  then $\lambda(\a)= |G|+ |\mathrm{Dom}(\a)|-1$.
 \end{theorem}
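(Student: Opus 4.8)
The plan is to follow the pattern of Theorem~\ref{lambda number theorem odd}. By Theorem~\ref{Lambdain path cover}(ii) it suffices to prove $c(\c)=|\mathrm{Dom}(\a)|+1$. Since $G$ is non-cyclic, $\a$ is not complete by Theorem~\ref{complete}, so $G\setminus\mathrm{Dom}(\a)\neq\varnothing$; as every dominating vertex of $\a$ is isolated in $\c$, this already gives $c(\c)\geq|\mathrm{Dom}(\a)|+1$, and it remains to exhibit a Hamiltonian path in the subgraph $\Delta$ of $\c$ induced by $G\setminus\mathrm{Dom}(\a)$.

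By Theorem~\ref{Dominating nilpotent}, $\mathrm{Dom}(\a)=\{(e',e_1,x):x\in\mathbb{Z}_n\}$, so $G\setminus\mathrm{Dom}(\a)=\mathcal{S}'\sqcup\mathcal{T}'$ with $\mathcal{S}'=\{(g_1,g_2,g_3):g_1\neq e'\}$ and $\mathcal{T}'=\{(e',g_2,g_3):g_2\neq e_1\}$. A Hamiltonian path $P_1$ in the subgraph of $\c$ induced by $\mathcal{S}'$ is provided by Theorem~\ref{Hamiltonian path in S'}. For $\mathcal{T}'$ I would argue exactly as in Theorem~\ref{lambda number theorem odd}: every $x=(e',g_2,g_3)\in\mathcal{T}'$ has order $o(g_2)\,o(g_3)=2^{j}m$ with $j\geq 1$ and $m\mid n$, and since a cyclic group has a unique subgroup of any given order, $x\sim y$ in $\a$ with $x,y\in\tau_{2^{j}m}$ forces $\langle x\rangle=\langle y\rangle$; hence $\tau_{2^{j}m}\subseteq\mathcal{T}'$ and the subgraph of $\c$ it induces is complete multipartite, the number of parts being the number of cyclic subgroups of $P$ of order $2^{j}$, which is $\geq 2$ by Theorem~\ref{pgroupclass}(ii), and $\geq 3$ when $j=1$ by Theorem~\ref{pgroupclass}(i). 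Ordering the orders that occur in $\mathcal{T}'$ with consecutive divisibility by means of Lemma~\ref{lcm} and stringing the corresponding pieces together as in the two cases of Theorem~\ref{lambda number theorem odd} yields a Hamiltonian path $P_2$ in the subgraph of $\c$ induced by $\mathcal{T}'$.

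It remains to concatenate $P_1$ and $P_2$. A direct computation using the direct product structure (every cyclic subgroup of $G=G'\times P\times\mathbb{Z}_n$ being a product of cyclic subgroups of pairwise coprime orders in the three factors) shows that, for $u=(u_1,u_2,u_3)\in\mathcal{S}'$ and $v=(e',v_2,v_3)\in\mathcal{T}'$, one has $u\sim v$ in $\c$ if and only if $u_2\neq e_1$ and $u_2,v_2$ lie in no common cyclic subgroup of $P$; the $G'$- and $\mathbb{Z}_n$-coordinates never obstruct adjacency since their orders are coprime to $|P|$. As $P$ is non-cyclic and not of maximal class, it has at least two subgroups of order $2$; let $a,b$ generate two of them, so $a$ and $b$ lie in no common cyclic subgroup of $P$. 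Every piece of $P_1$ is complete multipartite with at least three parts (Lemma~\ref{cd3 even}), hence admits a Hamiltonian path with any prescribed endpoint, and so do the bottom-layer pieces $\tau_{2m}$ of $\mathcal{T}'$ (their number of parts being $C_2(P)\geq 3$); arranging $P_1$ to end at a vertex with $P$-coordinate $a$ and $P_2$ to end at a vertex with $P$-coordinate $b$ and joining these two ends by the edge they span in $\c$ produces a Hamiltonian path in $\Delta$. Hence $c(\c)=|\mathrm{Dom}(\a)|+1$, and Theorem~\ref{Lambdain path cover}(ii) gives $\lambda(\a)=|G|+|\mathrm{Dom}(\a)|-1$.

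The step I expect to be the main obstacle is the construction of $P_2$. Unlike in Theorems~\ref{lambda number theorem odd} and~\ref{Hamiltonian path in S'}, a piece $\tau_{2^{j}m}$ with $j\geq 2$ may have exactly two parts, i.e.\ be a balanced complete bipartite graph, whose Hamiltonian paths necessarily have their two ends in different parts; making the divisibility-stringing go through at such a piece---showing that compatible entry and exit vertices can always be selected in its two parts, for instance by placing it in the chain adjacent to the piece $\tau_{2m}$ of the same $\mathbb{Z}_n$-order, which has at least three parts---requires a genuine additional argument, as does the parallel task of pinning down the $P$-coordinates of the two endpoints used in the concatenation step.
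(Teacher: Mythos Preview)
Your overall strategy---reduce to showing a Hamiltonian path in the subgraph of $\c$ on $G\setminus\mathrm{Dom}(\a)$, split this set as $\mathcal{S}'\cup\mathcal{S}''$, and invoke Theorem~\ref{Hamiltonian path in S'} for $\mathcal{S}'$---matches the paper. The genuine gap is that your argument tacitly restricts to the case where $P$ is \emph{not} of maximal class, whereas the theorem is stated for an arbitrary non-cyclic $2$-group $P$.

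Concretely: you invoke Theorem~\ref{pgroupclass} to conclude $C_{2^j}\geq 2$ (and $C_2\geq 3$), and later you write ``As $P$ is non-cyclic and not of maximal class\ldots''. But Theorem~\ref{pgroupclass} explicitly excludes the maximal-class $2$-groups, and by Corollary~\ref{pgroupclasscorollary} these are exactly $\mathbb{D}_{2^{k+1}}$, $\mathbb{Q}_{2^{k+1}}$, $\mathbb{SD}_{2^{k+1}}$, all of which satisfy $C_{2^j}=1$ for most values of $j$. In those cases the subgraph of $\c$ induced by $\tau_{2^{j}m}$ is an independent set, so your entire multipartite-pieces-plus-divisibility-stringing scheme for $\mathcal{T}'$ collapses. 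Relatedly, your description of $\mathrm{Dom}(\a)$ is wrong when $P\cong\mathbb{Q}_{2^{k+1}}$: by Theorem~\ref{Dominating nilpotent} the dominating set then also contains all $(e',y,x)$ with $o(y)=2$, so even the decomposition $G\setminus\mathrm{Dom}(\a)=\mathcal{S}'\sqcup\mathcal{T}'$ is off in that case. The paper deals with the maximal-class groups by abandoning the order-class viewpoint altogether and writing down, in each of the three subcases, an explicit Hamiltonian path through $\mathcal{S}''$ using the known list of maximal cyclic subgroups of $P$; this is not something your framework recovers.

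Even in the non-maximal-class case, your approach diverges from the paper's and the obstacle you flag is real. The paper does not order the $\tau_d$'s by divisibility; instead it builds a single explicit path through $\mathcal{S}''$ by cycling through the cyclic classes $\mathcal{C}^{(j)}_i$ of $P$ from $j=k$ down to $j=1$ (for each fixed $z\in\mathbb{Z}_n$), carefully relabelling so that the last class at level $j$ is non-adjacent to the first class at level $j-1$. This sidesteps the ``two-part'' problem you raised, because the path never needs a Hamiltonian path inside a single balanced bipartite $\tau_{2^jm}$ with prescribed endpoints; it weaves across levels instead. If you want to salvage your approach in this case you would indeed need the extra argument you describe, but the paper's direct construction is cleaner.
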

 \begin{proof}  In view of Theorem \ref{Lambdain path cover}, we show that the subgraph of $\c$ induced by the set of all non-dominating vertices of $\a$ has a Hamiltonian path. Let $e_1, e_2$ and $e_3$ be the identity elements of the groups $G', P$ and $\mathbb{Z}_n$, respectively. By Theorem \ref{Hamiltonian path in S'}, let $H'$ be a Hamiltonian path in the subgraph of $\c$ induced by the set $\mathcal{S}'$ and let  $g'=(x',y',z')$ be the end vertex of $H'$. By the proof of Theorem \ref{Hamiltonian path in S'}, notice that the order of $g'=(x',y',z')$ is maximum. Suppose that the exponent of the group $P$ is $2^k$. Consequently, $o(y')=2^k$. Further, observe that if $y\nsim y''$ in the graph $\mathcal{P}_E(P)$ then $(x,y,z)\nsim (x'',y'',z'')$ in $\a$. In view of Theorem \ref{pgroupclass} and Corollary \ref{pgroupclasscorollary}, we have the following cases:\\
 \noindent\textbf{Case-1: } \textit{$P$ is not of maximal class.} Consider the set $\mathcal{S}''=\{(e_1,y,z)\in G: y\neq e_2\}$. Notice that the sets $\mathcal{S}'$, defined in Lemma \ref{cd3 even}, $\mathcal{S}''$ and  $\mathrm{Dom}(\a)$ forms a partition of $G$. Now we provide a Hamiltonian path of the subgraph of $\c$ induced by the set $\mathcal{S}'\cup \mathcal{S}''$. To do this, first we drive a Hamiltonian path of the subgraph of $\c$ induced by the set $\mathcal{S}''$. By Theorem \ref{pgroupclass}, for $2\leq j \leq k$, we have $C_{2^j}\geq 2$ and $C_2\geq 3$ in $P$. For $1\leq j \leq k$, let $t_j=C_{2^j}$ and let $\mathcal{T}_j=\{\mathcal{C}_1^{(j)}, \mathcal{C}_2^{(j)}, \ldots , \mathcal{C}_{t_j}^{(j)}\}$ denotes the cyclic classes of $P$ containing the elements of order $2^j$.  Observe that each class in $\mathcal{T}_j$ is of cardinality $2^{j-1}$. Notice that each element of $P$  belongs to exactly one cyclic class of $\mathcal{T}_j$ for $j\in [k]$. Further note that, for $i\neq s$, if $x_i\in \mathcal{C}_i^{(j)}$ and $y_s\in  \mathcal{C}_s^{(j)}$, then $x_i\nsim y_s$ in $\mathcal{P}_E(P)$. We label a class in $\mathcal{T}_k$ by $\mathcal{C}_{1}^{(k)}$ such that $y'\notin \mathcal{C}_{1}^{(k)}$. For $2\leq j \leq k$, let $u_j\in \mathcal{C}_{t_j}^{(j)}$  be an arbitrary element. Now $u_j$ can be adjacent to at most one of the cyclic classes in $\mathcal{T}_{j-1}$ in $\mathcal{P}_E(P)$. If possible, let $u_j\sim v_1$ and $u_j\sim v_2$, where $v_1\in \mathcal{C}_{i_1}^{(j-1)}$, $v_2\in \mathcal{C}_{i_2}^{(j-1)}$. Then there exist elements $w_1, w_2\in P$ such that $u_j, v_1\in \langle w_1 \rangle$ and $u_j, v_2\in \langle w_2 \rangle$. Since $o(v_1)\vert o(u_j)$ and $o(v_2)\vert o(u_j)$, we get $\langle v_1 \rangle = \langle v_2 \rangle$. It follows that $i_1=i_2$. By $t_{j-1}\geq 2$, we obtain that the elements of $\mathcal{C}_{t_j}^{(j)}$ is not adjacent to at least one of the cyclic class in $\mathcal{T}_{j-1}$ in $\mathcal{P}_E(P)$. By relabelling, if necessary, we may assume that each element of $\mathcal{C}_{t_j}^{(j)}$  is not adjacent to every element of $\mathcal{C}_{1}^{(j-1)}$ in $\mathcal{P}_E(P)$. Since $t_1\geq 3$, we can label a class in $\mathcal{T}_1 \setminus \mathcal{C}_1^{(1)}$ by $\mathcal{C}_{t_1}^{(1)}$ in which for $x\in \mathcal{C}_{t_1}^{(1)}$ and $y\in \mathcal{C}_{1}^{(k)}$, we have $\langle x\rangle \nsubseteq \langle y \rangle$. It implies that $x\nsim y$ in $\mathcal{P}_E(P)$. Let $z_1,z_2,\ldots ,z_n$ be the elements of $\mathbb{Z}_n$. Then for $y_{p,q}^{(r)}$, the $q$-th element of $\mathcal{C}_{p}^{(r)}$, the path $H''$ given below\\
$(e_1,y_{1,1}^{(k)},z_1)\sim (e_1,y_{2,1}^{(k)},z_1)\sim \cdots \sim  (e_1,y_{t_k,1}^{(k)},z_1)\sim (e_1,y_{1,2}^{(k)},z_1)\sim (e_1,y_{2,2}^{(k)},z_1)\sim \cdots \sim (e_1,y_{t_k,2^{k-1}}^{(k)},z_1)\sim (e_1,y_{1,1}^{(k-1)},z_1) \sim (e_1,y_{2,1}^{(k-1)},z_1) \sim  \cdots \sim (e_1,y_{t_1,1}^{(1)},z_1) \sim (e_1,y_{1,1}^{(k)},z_2)\sim (e_1,y_{2,1}^{(k)},z_2)\ \sim \cdots \sim (e_1,y_{t_1,1}^{(1)},z_n)$, where $1\leq r \leq k$, $1\leq p \leq t_r$ and $1\leq q \leq 2^{r-1}$, is a Hamiltonian path in the subgraph of $\c$ induced by the set $\mathcal{S}''$. Since $y'\notin \mathcal{C}_1^{(k)}$, we get $y' \nsim y_{1,1}^{(k)}$ in $\mathcal{P}_E(P)$. Consequently, $(x',y',z')\nsim (e_1, y_{1,1}^{(k)}, z_1)$ in $\a$ and so $(x',y',z')\sim (e_1, y_{1,1}^{(k)}, z_1)$ in $\c$. Thus, we get a Hamiltonian path in the subgraph of $\c$ induced by the set $\mathcal{S}'\cup \mathcal{S}'' $.

  \noindent\textbf{Case-2: } \textit{$P$ is of maximal class.} In view of Corollary \ref{pgroupclasscorollary}, we discuss this case  into three subcases.

\textbf{Subcase-2.1:}
  $P=\mathbb{Q}_{2^{k+1}}=\left\langle x, y: x^{2^{k}}=e_2, x^{2^{k-1}}=y^{2}, y^{-1} x y=x^{-1}\right\rangle$, where $k \geq 2$.  
   Consider the set  $\mathcal{S}''=\{(e_1,b,c)\in G: b\neq e_2, x^{2^{k-1}}\}$. Note that the sets $\mathcal{S}', \  \mathcal{S}'' $ and $ \mathrm{Dom}(\a) $ forms a partition of the group $G$. Observe that $\mathbb{Q}_{2^{k+1}}$ has one maximal cyclic subgroup of order $2^k$ and $2^{k-1}$ maximal cyclic subgroup  of order $4$ (see \cite{a.dalal2021enhanced}).  Let $M'=\langle x\rangle $ be the maximal cyclic subgroup of order $2^k$ and let for $1\leq i \leq 2^{k-1}$, $M_i=\{e_2,x^{2^{k-1}},x^{i}y, x^{2^{k-1}+i}y\} $ be the maximal cyclic subgroups of order $4$. For $1\leq j \leq 2^{k-1}$, note that $x^jy$ is a generator of a maximal cyclic subgroup of $\mathbb{Q}_{2^{k+1}}$. Consequently, $x^jy\nsim a$ in $\mathcal{P}_E(\mathbb{Q}_{2^{k+1}})$, where $a\in \mathbb{Q}_{2^{k+1}} \setminus \langle x^jy\rangle$. Since $o(y')=2^k$, for $k\geq 3$, we have $y'\in M'$. Thus, the Hamiltonian path $H''$ in the subgraph of $\c$ induced by the set $\mathcal{S}''$ can be given as $ (e_1,xy,z_1)\sim (e_1,x,z_1)\sim (e_1,x^2y,z_1)\sim (e_1,x^2,z_1)\sim \cdots \sim  (e_1,x^{2^{k-1}-1}y,z_1)\sim (e_1,x^{2^{k-1}-1},z_1)\sim (e_1,x^{2^{k-1}}y,z_1)\sim (e_1,x^{2^{k-1}+1},z_1)\sim \cdots  \sim (e_1,x^{2^{k}-1},z_1) \sim (e_1,x^{2^{k}-1}y,z_1) \sim (e_1,y,z_1) \sim (e_1,xy,z_2)\sim (e_1,x,z_2)\sim \cdots \sim (e_1,y,z_n)$, where $z_1,z_2,\ldots ,z_n \in \mathbb{Z}_n$. We have a Hamiltonian path $H'$ in the subgraph induced by the set $\mathcal{S}'$ with end vertex $(x',y',z')$ and also $H''$ is a Hamiltonian path induced by $\mathcal{S}''$ with initial vertex $(e_1,xy,z_1)$. Moreover, $(x',y',z')\sim (e_1,xy,z_1)$. Thus, we get a Hamiltonian path $H$ in the subgraph induced by $\mathcal{S}' \cup  \mathcal{S}'' $.  If $k=2$ and $y'\in M_1$ then again we have a Hamiltonian path by interchanging the vertices $(e_1,xy,z_1)$ and $(e_1, x^2y, z_1)$ of $H$.
   
\textbf{Subcase-2.2:} $P=\mathbb{D}_{2^{k+1}}=\left\langle x, y: x^{2^{k}}=e_2= y^{2}, y^{-1} x y=x^{-1}\right\rangle $, where $k \geq 1$.
  Consider the set  $\mathcal{S}''=\{(e_1,b,c)\in G: b\neq e_2\}$. Observe that the sets  $\mathcal{S}' ,\ \mathcal{S}'' $ and $\mathrm{Dom}(\a) $ forms a partition of the group $G$. Also notice that $M'=\langle x\rangle $ is the only maximal cyclic subgroup  of order $2^k$ in $\mathbb{D}_{2^{k+1}}$ and for $1\leq i \leq 2^{k}$, $M_i=\{e_2,x^{i}y\} $ are the maximal cyclic subgroups of order $2$ in $\mathbb{D}_{2^{k+1}}$. By {\rm \cite[Figure 1]{a.panda2021enhanced}}, $x^jy$, where $1\leq j \leq 2^{k}$, is not adjacent to any  non-identity element of $\mathbb{D}_{2^{k+1}}$ in $\mathcal{P}_E(\mathbb{D}_{2^{k+1}})$. Since $o(y')=2^k$, for $k\geq 2$, we have $y'\in M'$. Thus, the Hamiltonian path in the subgraph of $\c$ induced by the set $\mathcal{S}''$ is $H'': (e_1,xy,z_1)\sim (e_1,x,z_1)\sim (e_1,x^2y,z_1)\sim (e_1,x^2,z_1)\sim \cdots \sim  (e_1,y,z_1) \sim (e_1,xy,z_2)\sim \cdots \sim (e_1,y,z_n)$, where $z_1,z_2,\ldots ,z_n \in \mathbb{Z}_n$. We have a Hamiltonian path $H'$ in the subgraph induced by the set $\mathcal{S}'$ with end vertex $(x',y',z')$ and also $H''$ is a Hamiltonian path induced by $\mathcal{S}''$ with initial vertex $(e_1,xy,z_1)$. Furthermore, $(x',y',z')\sim (e_1,xy,z_1)$. Consequently, we get a Hamiltonian path $H$ in the subgraph induced by $\mathcal{S}' \cup  \mathcal{S}'' $.  If $k=1$ and $y'\in M_1$, then again we have a Hamiltonian path by interchanging the vertices $(e_1,xy,z_1)$ in $(e_1, x^2y, z_1)$ of $H$.

\textbf{Subcase-2.3:} $P=\mathbb{SD}_{2^{k+1}}=\left\langle x, y: x^{2^{k}}=e_2, y^{2}=e_2, y^{-1} x y=x^{-1+2^{k-1}}\right\rangle$, where $k \geq 3$.
     Consider the set  $\mathcal{S}''=\{(e_1,b,c)\in G: b\neq e_2 \}$. Notice that the sets $\mathcal{S}', \ \mathcal{S}'' $ and  $\mathrm{Dom}(\a) $ forms a partition of the group $G$. Also note that $\mathbb{SD}_{2^{k+1}}$ has one maximal cyclic subgroup of order $2^k$, $2^{k-1}$ cyclic subgroup  of order $2$ and $2^{k-2}$ maximal cyclic subgroup of order $4$.  Let $M'=\langle x\rangle $ be the maximal cyclic subgroup of order $2^k$ and for $1\leq i \leq 2^{k-2}$, $M_{i}=\{e_2,x^{2^{k-1}},x^{2i+1}y, x^{{2^{k-1}}+2i+1}y\} $ be the maximal cyclic subgroups of $\mathbb{SD}_{2^{k+1}}$ of order $4$ and for $1\leq j \leq 2^{k-1}$, $M''_{j}=\{e_2, x^{2j}y\} $ be the maximal cyclic subgroups of $\mathbb{SD}_{2^{k+1}}$ of order $2$. Now for $1\leq t \leq 2^k$, $x^ty$ is a generator of a maximal cyclic subgroup of $\mathbb{SD}_{2^{k+1}}$. It follows that $x^jy\nsim b$ in $\mathcal{P}_E(\mathbb{SD}_{2^{k+1}})$, where $b\in \mathbb{SD}_{2^{k+1}}\setminus \langle x^jy \rangle $(see \rm \cite[Figure 2]{a.panda2021enhanced}). Let $z_1,z_2,\ldots ,z_n$ be the elements of $\mathbb{Z}_n$. Thus, the Hamiltonian path $H''$ in the subgraph of $\c$ induced by the set $\mathcal{S}''$ is as follows \\
      $(e_1,xy,z_1)\sim (e_1,x,z_1)\sim (e_1,x^3y,z_1)\sim (e_1,x^3,z_1)\sim   \cdots \sim  (e_1,x^{2^{k}-1}y,z_1)\sim (e_1,x^{2^{k}-1},z_1)\sim  (e_1,y,z_1)\sim (e_1,x^2y,z_1)\sim (e_1,x^2,z_1)\sim (e_1,x^4y,z_1)\sim \cdots  \sim (e_1,x^{2^{k}-2},z_1) \sim (e_1,x^{2^{k}-2}y,z_1) \sim (e_1,xy,z_2) \sim (e_1,x,z_2)\sim (e_1,x^3,z_2)\sim \cdots \sim (e_1,x^{2^{k}-2}y,z_n).$
     
     Moreover, we have a Hamiltonian path $H'$ in the subgraph induced by the set $\mathcal{S}'$ with end vertex $(x',y',z')$. Since $o(y')=2^k$, we have $y'\in M'$. It follows that $y'\nsim xy$ and so  $(x',y',z')\sim (e_1,xy,z_1)$ in $\c$. Thus, the subgraph of $\c$ induced by the set $G\setminus \mathrm{Dom}(\a) $ has a Hamiltonian path.
 \end{proof}
 
 \begin{corollary}
 For the group $G=\mathbb{Q}_{2^{k+1}}$, we have $\lambda (\a)=2^{k+1}+1$.
 \end{corollary}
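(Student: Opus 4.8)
The plan is to read off the corollary from Theorem \ref{lambda number main} rather than redo any graph construction. First I would realize $\mathbb{Q}_{2^{k+1}}$ as the degenerate member of the family treated there: take $G'$ to be the trivial group, $n=1$ and $P=\mathbb{Q}_{2^{k+1}}$, which for $k\geq 2$ is indeed a non-cyclic $2$-group (in fact of maximal class). Then $G\cong G'\times P\times \mathbb{Z}_n$, the conditions $\mathrm{gcd}(n,|G'|)=\mathrm{gcd}(2,n)=1$ hold trivially with $n=1$, and the requirement that $G'$ have no cyclic Sylow subgroup holds vacuously since the trivial group has no Sylow subgroups at all. Hence Theorem \ref{lambda number main} applies and gives $\lambda(\a)=|G|+|\mathrm{Dom}(\a)|-1=2^{k+1}+|\mathrm{Dom}(\a)|-1$. (If one prefers not to lean on the degenerate reading of the hypotheses, the same conclusion follows by invoking Subcase~2.1 of the proof of Theorem \ref{lambda number main} directly: it already exhibits a Hamiltonian path in the subgraph of $\c$ induced by $G\setminus\mathrm{Dom}(\a)$ when $P=\mathbb{Q}_{2^{k+1}}$, so $c(\c)=|\mathrm{Dom}(\a)|+1$ and Theorem \ref{Lambdain path cover} yields the formula.)

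It then remains to compute $|\mathrm{Dom}(\a)|$. I would apply Theorem \ref{Dominating nilpotent} with $G'$ trivial and $n=1$: there $\mathrm{Dom}(\a)=D_1\cup D_2$, where $D_1$ collapses to $\{e_2\}$ and $D_2=\{y\in\mathbb{Q}_{2^{k+1}}:o(y)=2\}$. Since $\mathbb{Q}_{2^{k+1}}$ has a unique element of order $2$ — its centre $\langle x^{2^{k-1}}\rangle$ is the only subgroup of order $2$ — we get $|D_2|=1$, so $|\mathrm{Dom}(\a)|=2$. (This is also visible by hand: the identity and the unique involution lie in every maximal cyclic subgroup of $\mathbb{Q}_{2^{k+1}}$ and hence dominate, whereas a generator of one of the $4$-element maximal cyclic subgroups shares no cyclic subgroup with a generator of $\langle x\rangle$ and so is not dominating.) Substituting, $\lambda(\a)=2^{k+1}+2-1=2^{k+1}+1$, as claimed.

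Since the whole argument merely instantiates results already established, there is essentially no obstacle. The only two points I would double-check carefully are that nothing in the proof of Theorem \ref{lambda number main} (in particular in Subcase~2.1) actually uses non-triviality of $G'$ or of $\mathbb{Z}_n$, so the substitution $G'=\{e\}$, $n=1$ is legitimate, and the small bookkeeping in pinning down $|\mathrm{Dom}(\a)|=2$.
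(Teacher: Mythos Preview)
Your proposal is correct and is exactly the intended reading: the corollary is obtained by specializing Theorem~\ref{lambda number main} to $G'=\{e\}$, $n=1$, $P=\mathbb{Q}_{2^{k+1}}$, and then computing $|\mathrm{Dom}(\a)|=2$ via Theorem~\ref{Dominating nilpotent} (the unique involution plus the identity). Your parenthetical remark is apt: when $G'$ is trivial the set $\mathcal{S}'$ is empty and the vertex $g'=(x',y',z')$ referenced in the proof does not exist, so one should read Subcase~2.1 as producing the Hamiltonian path $H''$ on $\mathcal{S}''=G\setminus\mathrm{Dom}(\a)$ directly, with no gluing step needed.
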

 
 \begin{corollary}
 For the group $G\in \{\mathbb{D}_{2^{k+1}}, \mathbb{SD}_{2^{k+1}}\}$, we have $\lambda (\a)=2^{k+1}$.
 \end{corollary}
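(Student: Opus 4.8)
The plan is to obtain the equality for both groups at once by viewing $G$ as the degenerate case of Theorem \ref{lambda number main} in which the odd-order factor is trivial and $n=1$. Writing $G\cong\{e\}\times P\times\mathbb{Z}_1$ with $P=\mathbb{D}_{2^{k+1}}$ (respectively $P=\mathbb{SD}_{2^{k+1}}$), the hypotheses $\gcd(n,|G'|)=\gcd(2,n)=1$ hold trivially, and the set $\mathcal{S}'$ occurring in the proof of Theorem \ref{lambda number main} is empty, so that $G\setminus\mathrm{Dom}(\a)$ coincides with the set $\mathcal{S}''$ on which an explicit Hamiltonian path of $\c$ is exhibited in Subcase 2.2 (resp. Subcase 2.3) of that proof. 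Hence $c(\c)=|\mathrm{Dom}(\a)|+1$, and Theorem \ref{Lambdain path cover} yields $\lambda(\a)=|G|+|\mathrm{Dom}(\a)|-1=2^{k+1}+|\mathrm{Dom}(\a)|-1$.

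It then remains only to compute $|\mathrm{Dom}(\a)|$. I would read this off Theorem \ref{Dominating nilpotent} with $G'$ trivial and $n=1$: for $G=\mathbb{D}_{2^{k+1}}$ it gives $\mathrm{Dom}(\a)=\{(e',e_3,x):x\in\mathbb{Z}_1\}$, and for $G=\mathbb{SD}_{2^{k+1}}$ it gives $\mathrm{Dom}(\a)=\{(e',e_4,x):x\in\mathbb{Z}_1\}$, each a one-element set, so $|\mathrm{Dom}(\a)|=1$. (Equivalently, one can see $\mathrm{Dom}(\a)=\{e\}$ directly: in $\mathbb{D}_{2^{k+1}}$ the subgroup $\langle x\rangle$ is the only maximal cyclic subgroup of order exceeding $2$ and every reflection $x^jy$ generates a maximal cyclic subgroup of order $2$, so two distinct reflections share a cyclic overgroup only through $e$; a similar inspection of the maximal cyclic subgroups of $\mathbb{SD}_{2^{k+1}}$ listed in Subcase 2.3 shows that no non-identity element is adjacent to all others in $\a$.) Substituting, $\lambda(\a)=2^{k+1}+1-1=2^{k+1}$.

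I do not expect any genuine obstacle here: the corollary is a direct specialisation of the theorem just proved, and the only verification of substance is the count $|\mathrm{Dom}(\a)|=1$. The single point to keep in mind is the boundary value $k=1$ for the dihedral group, where $\mathbb{D}_4\cong\mathbb{Z}_2\times\mathbb{Z}_2$ is (trivially) of maximal class; but this is precisely the case already handled inside the proof of Theorem \ref{lambda number main} (the remark at the end of Subcase 2.2), and it is consistent with the earlier corollary, which gives $\lambda(\a)=2|G|-4=4=2^{1+1}$.
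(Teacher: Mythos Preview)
Your proposal is correct and is exactly the intended argument: the paper presents this corollary without proof, as an immediate specialisation of Theorem~\ref{lambda number main} together with the count $|\mathrm{Dom}(\a)|=1$ read off from Theorem~\ref{Dominating nilpotent}. Your treatment of the degenerate inputs $G'=\{e\}$, $n=1$, and your remark on the boundary case $k=1$ for $\mathbb{D}_{2^{k+1}}$, are accurate; the digression into $\mathcal{S}'=\varnothing$ and $\mathcal{S}''$ is unnecessary since Theorem~\ref{lambda number main} applies as a black box, but it does no harm.
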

\section{Acknowledgement}
The first author gratefully acknowledge for providing financial support to CSIR  (09/719(0110)/2019-EMR-I) government of India. The second author 
gratefully acknowledge for Post Doctoral Fellowship (NISER/OO/SMS/PDF/2021-22/007) provided by the Department of Atomic Energy, Government of India.

\vspace{1cm}
\noindent
{\bf Parveen\textsuperscript{\normalfont 1}, \bf Sandeep Dalal\textsuperscript{\normalfont 2} Jitender Kumar\textsuperscript{\normalfont 1}}
\bigskip

\noindent{\bf Addresses}:

\vspace{5pt}


\end{document}